\numberwithin{equation}{section}
\numberwithin{subsection}{section}
\newtheorem*{namedtheorem}{\theoremname}
\newcommand{\theoremname}{testing}
\newtheorem{theorem}{Theorem}
\newtheorem{proposition}[theorem]{Proposition}
\newtheorem{proposition-definition}[theorem]
{Proposition-Definition}
\newtheorem{corollary}[theorem]{Corollary}
\newtheorem{lemma}[theorem]{Lemma}
\newtheorem*{theorem*}{Theorem}
\theoremstyle{definition}
\newtheorem{definition}[theorem]{Definition}
\newtheorem{example}[theorem]{Example}
\newtheorem*{question*}{Question}
\theoremstyle{remark}
\renewcommand{\mathcal}{\mathscr}
\newcommand\cA{\mathcal{A}} \newcommand\cB{\mathcal{B}}
\newcommand\cC{\mathcal{C}} 
\newcommand\cG{\mathcal{G}}
\newcommand\cM{\mathcal{M}} \newcommand\cN{\mathcal{N}}
\newcommand\cO{\mathcal{O}}
\newcommand\cU{\mathcal{U}} 
 \newcommand\cX{\mathcal{X}}
 \newcommand\cZ{\mathcal{Z}}
 \newcommand\PP{\mathbb{P}}
\newcommand\QQ{\mathbb{Q}} \newcommand\RR{\mathbb{R}}
 \newcommand\bX{\mathbf{X}}
\newcommand\rK{\mathrm{K}}
\newcommand\fF{\mathfrak{F}} 
\newcommand\fG{\mathfrak{G}}
\newcommand\fN{\mathfrak{N}}
\newcommand\fQ{\mathfrak{Q}}
\newcommand\fX{\mathfrak{X}}
\newcommand\arr{\ifinner\to\else\longrightarrow\fi}
\newcommand\arrto{\ifinner\mapsto\else\longmapsto\fi}
\renewcommand\H{\operatorname{H}}
\newcommand{\eqdef}{\mathrel{\smash{\overset{\mathrm{\scriptscriptstyle def}} =}}}
\newcommand\im{\operatorname{im}}
\def\displaytimes_#1{\mathrel{\mathop{\times}\limits_{#1}}}
\def\displayotimes_#1{\mathrel{\mathop{\bigotimes}\limits_{#1}}}
\newcommand\aut{\operatorname{Aut}}
\newcommand\spec{\operatorname{Spec}}
\newcommand{\underaut}{\mathop{\underline{\mathrm{Aut}}}\nolimits}
\newlength{\ignora}
\renewcommand{\setminus}{\smallsetminus}
\newcommand{\PGL}{\mathrm{PGL}}
\newcommand{\gal}{\operatorname{Gal}}
\DeclareFontFamily{U}{mathx}{\hyphenchar\font45}
\DeclareFontShape{U}{mathx}{m}{n}{
      <5> <6> <7> <8> <9> <10>
      <10.95> <12> <14.4> <17.28> <20.74> <24.88>
      mathx10
      }{}
\DeclareSymbolFont{mathx}{U}{mathx}{m}{n}
\DeclareMathAccent{\widecheck}{0}{mathx}{"71}
\DeclareMathAccent{\wideparen}{0}{mathx}{"75}
\renewcommand{\epsilon}{\varepsilon}
\newcommand{\cha}{\operatorname{char}}
\newcommand{\aff}[1][k]{(\mathrm{Aff}/#1)}
\newcommand{\as}[1][k]{(\mathrm{FAS}/#1)}
\begin{document}

\title{The field of moduli of varieties with a structure}

\author{Giulio Bresciani}

\begin{abstract}
	If $X$ is a variety with an additional structure $\xi$, such as a marked point, a divisor, a polarization, a group structure and so forth, then it is possible to study whether the pair $(X,\xi)$ is defined over the field of moduli. There exists a precise definition of ``algebraic structures'' which covers essentially all of the obvious concrete examples. We prove several formal results about algebraic structures. There are immediate applications to the study of fields of moduli of curves and finite sets in $\mathbb{P}^{2}$, but the results are completely general.
	
	Fix $G$ a finite group of automorphisms of $X$, a $G$-structure is an algebraic structure with automorphism group equal to $G$. First, we prove that $G$-structures on $X$ are in a $1:1$ correspondence with twisted forms of $X/G\dashrightarrow\cB G$. Secondly we show that, under some assumptions, every algebraic structure on $X$ is equivalent to the structure given by some $0$-cycle. Third, we give a cohomological criterion for checking the existence of $G$-structures not defined over the field of moduli. Fourth, we identify geometric conditions about the action of $G$ on $X$ which ensure that every $G$-structure is defined over the field of moduli.
\end{abstract}

\address{Scuola Normale Superiore\\Piazza dei Cavalieri 7\\
56126 Pisa\\ Italy}
\email{giulio.bresciani@gmail.com}

\maketitle
\section{Introduction}

Let $K/k$ be a possibly infinite Galois extension of fields. Consider a variety $X$ over $K$ with some additional structure $\xi$: for instance, $\xi$ might be a preferred point $x\in X(K)$, or a polarization, or a group scheme structure (see \cite[\S 5]{giulio-angelo-moduli} for a precise definition of ``structures'').

Given a Galois element $\sigma\in\gal(K/k)$, consider the twist $\sigma^{*}(X,\xi)$: if $X$ is defined by polynomials, this corresponds to applying $\sigma$ to the coefficients of the polynomials. It can be shown that the subgroup $H\subset\gal(K/k)$ of elements $\sigma$ such that $\sigma^{*}(X,\xi)\simeq (X,\xi)$ is open, and the field of elements of $K$ fixed by $H$ is called \emph{the field of moduli} of $(X,\xi)$. If $(X,\xi)$ descends to some subextension $K/k'/k$, then $k'$ contains the field of moduli. The following is the basic question.

\begin{question*}
	Is $(X,\xi)$ defined over its field of moduli?
\end{question*} 

One of the oldest known results regarding this question is the fact that an elliptic curve $E$ over $\bar{\QQ}$ is defined over $\QQ(j_{E})$ where $j_{E}$ is the $j$-invariant of $E$; this result predates the concept of field of moduli by several decades. Fields of moduli were introduced by Matsusaka \cite{matsusaka-field-of-moduli} in 1958 and later clarified by Shimura \cite{shimura-automorphic}, who also proved that a generic, principally polarized abelian variety of odd dimension in characteristic $0$ is defined over its field of moduli \cite{shimura}. They have been studied intensively over the years, mainly for curves and abelian varieties, see for instance \cite{koizumi} \cite{murabayashi} \cite{debes-douai} \cite{debes-emsalem} \cite{huggins} \cite{cardona-quer} \cite{kontogeorgis} \cite{marinatto}.

One main reason why results are restricted to curves and abelian varieties is the lack of appropriate technology. In particular, a lot of results about curves rely on results by Dèbes, Douai and Emsalem contained in \cite{debes-douai} \cite{debes-emsalem} which until recently were only available in dimension $1$. In our joint article with A. Vistoli \cite{giulio-angelo-moduli}, we have generalized and clarified their methods and results: there are now a general framework and general techniques for studying fields of moduli of varieties with a structure in arbitrary dimension. As an example of our new techniques, we reproved and generalized Shimura's result about abelian varieties in a much more theoretical fashion \cite[Corollary 6.25]{giulio-angelo-moduli}.

The study of fields of moduli of higher dimensional varieties is thus a largely unexplored topic. As a first application of our joint work with Vistoli \cite{giulio-angelo-moduli} to an open problem, in \cite{giulio-p2} \cite{giulio-points} \cite{giulio-realcomplex} we study the field of moduli of curves and finite subsets of $\PP^{2}$. Among other things, we prove that every smooth plane curve of degree prime with $6$ is defined over the field of moduli, and that odd degree is sufficient if the base field is $\RR$.

This brief note is an expansion of the general technology constructed in \cite{giulio-angelo-moduli}. The results contained here are crucial for our work on $\PP^{2}$, but completely general in nature, and will be applied in more forthcoming works about fields of moduli of higher dimensional varieties. 

\subsection*{Acknowledgements}

I would like to thank an anonymous referee for providing useful remarks.

\subsection*{Notation}

We use $\underaut$ for sheaves of groups of automorphisms, and $\aut$ for set-theoretic groups of automorphisms.  

\section{Contents of the paper}

For the rest of the paper, $k$ is a field with separable closure $K$, and $X$ is a separated, connected, normal algebraic space of finite type over $K$.

In order to study the problem of the field of moduli of the space $X$ with some additional structure $\xi$, such as a group structure, marked points, effective divisors, polarizations and so forth we need to define precisely what ``an algebraic structure'' is. We have given such a formalization using stacks in \cite[\S 5]{giulio-angelo-moduli}, and we recall it here in \S\ref{sect:str}.

In our intentions, the concept of algebraic structure was nothing more than a unifying definition. It turns out that studying algebraic structures as independent, abstract objects not necessarily tied to some geometric meaning leads downstream to insights about the original problem of fields of moduli of actual, geometric objects. Let us give an example.

In \cite{giulio-p2} we prove that, for the large majority of finite subgroups $G\subset\PGL_{3}(\bar{\QQ})$, \emph{every} algebraic structure on $\PP^{2}_{\bar{\QQ}}$ with automorphism group equal to $G$ is defined over its field of moduli. This is completely independent of the geometric origin of the structure: the result holds for sets of points, for smooth curves, for cycles and so forth. The general techniques which we give here have nothing to do with $\PP^{2}$, though, so the same kind of analysis can be done for other varieties.

Fix $G\subset\aut_{K}(X)$ is a finite group of automorphisms of $X$ of order prime with $\cha k$. A $G$-structure is an algebraic structure $\xi$ on $X$ such that the group scheme of automorphism $\underaut(X,\xi)$ is constant and equal to $G$.

\subsection{Equivalent structures}

Given a $G$-structure $\xi$ on $X$, the concept of field of moduli $k_{\xi}$ of $(X,\xi)$ is defined, and there is a finite étale gerbe $\cG_{\xi}$ over the field of moduli $k_{\xi}$ called the \emph{residual gerbe} \cite[\S 3.1]{giulio-angelo-moduli} which parametrizes twisted forms of $(X,\xi)$, and a universal family $\cX_{\xi}\to\cG_{\xi}$ whose fibers are the corresponding twists of $X$ \cite[\S 5]{giulio-angelo-moduli}. In particular, $\xi$ is defined over $k_{\xi}$ if and only if $\cG_{\xi}(k_{\xi})\neq\emptyset$. The coarse moduli space $\bX_{\xi}$ of $\cX_{\xi}$ is called the \emph{compression} of $\xi$, and there is an induced rational map $\bX_{\xi}\dashrightarrow\cG_{\xi}$.

We regard two $G$-structures as equivalent if they have the same field of moduli and isomorphic universal families over isomorphic residual gerbes. Loosely speaking, two $G$-structures are equivalent if they contain essentially the same data and hence their respective twisted forms are in a $1:1$ correspondence.

\subsection{Twisted $G$-quotients}

Our first result is a classification of algebraic structures up to equivalence.

\begin{definition}
    A \emph{twisted $G$-quotient} of $X$ over $k$ is an algebraic space $Y$ over $k$ with an isomorphism $Y_{K}\simeq X/G$ such that for every $\sigma\in\gal(K/k)$ there exists a $\sigma$-linear automorphism of $X$ which commutes with the composition $X\to X/G\simeq Y_{K}\to Y$. Equivalently (see Corollary~\ref{corollary:twisteq}), a twisted $G$-quotient is a rational map $Y\dashrightarrow \cG$ which is a twisted form over $k$ of $X/G\dashrightarrow \cB G$.
\end{definition}

We say that two twisted $G$-quotients are equivalent if there is an isomorphism which respects the identification with $X/G$.

\begin{theorem}\label{theorem:GstrGtwist}
	Mapping an algebraic structure $\xi$ with field of moduli $k$ to the compression $\bX_{\xi}\dashrightarrow\cG_{\xi}$ defines a one-to-one correspondence between $G$-structures on $X$ and twisted $G$-quotients of $X$ over $k$, up to equivalence.
	
	The structure $\xi$ is defined over the field of moduli if and only if the projection $X\to X/G$ descends to a ramified covering of the compression $\bX_{\xi}$.
\end{theorem}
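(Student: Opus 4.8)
The plan is to deduce the theorem from the packaging of algebraic structures in \cite[\S5]{giulio-angelo-moduli}. I shall use that the equivalence class of a $G$-structure $\xi$ on $X$ with field of moduli $k$ is the same datum as the pair $(\cG_\xi,\cX_\xi\to\cG_\xi)$ formed by its residual gerbe over $k$ and its universal family, where $\cX_\xi\to\cG_\xi$ is representable of finite type and, compatibly with the canonical identification $\cG_{\xi,K}\simeq\cB G$, base-changes over $K$ to the projection $[X/G]\to\cB G$ of the quotient stack (whose coarse space is $X/G$ and which over the locus where $G$ acts freely is the torsor $X\to X/G$). With this dictionary the compression is the coarse space $\bX_\xi$ of $\cX_\xi$ together with the induced rational map $\bX_\xi\dashrightarrow\cG_\xi$, which over $K$ is precisely $X/G\dashrightarrow\cB G$; hence $\bX_\xi\dashrightarrow\cG_\xi$ is automatically a twisted form over $k$ of $X/G\dashrightarrow\cB G$, i.e.\ a twisted $G$-quotient in the guise of Corollary~\ref{corollary:twisteq}, and the assignment visibly factors through equivalence. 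So the first assertion amounts to showing that $\xi\mapsto(\bX_\xi\dashrightarrow\cG_\xi)$ is a bijection onto twisted $G$-quotients over $k$.

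The heart of the matter is a reconstruction statement: $[X/G]\to\cB G$ is recovered functorially from $X/G\dashrightarrow\cB G$ by ``normalizing into the gerbe''. Given a twisted $G$-quotient, presented via Corollary~\ref{corollary:twisteq} as a rational map $Y\dashrightarrow\cG$ over $k$ that is a twisted form of $X/G\dashrightarrow\cB G$, the generic point yields a morphism $\spec k(Y)\to\cG$, i.e.\ a twisted $G$-torsor over $k(Y)$ which over $K$ becomes the $G$-Galois extension $K(X)/K(X/G)$; let $Z$ be the normalization of $Y$ in it, a finite ramified covering carrying a $G$-action with $Z/G=Y$, and set $\cX\coloneqq[Z/G]$ with its induced map to $\cG$. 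Since $X$ is normal and $X\to X/G$ is finite, $X$ is the normalization of $X/G$ in $K(X)$; because normalizations and quotient stacks commute with the base change $\spec K\to\spec k$ (a filtered colimit of \'etale extensions), the pair $(\cG,\cX\to\cG)$ becomes $[X/G]\to\cB G$ over $K$ and is therefore the residual datum of a $G$-structure. This construction is inverse to taking the compression: the coarse space of $[Z/G]$ is $Y$, and applied to $\bX_\xi\dashrightarrow\cG_\xi$ it returns $\cX_\xi$, since it does so after base change to $K$ and the whole construction descends. Hence $\xi\mapsto(\bX_\xi\dashrightarrow\cG_\xi)$ is a bijection, compatible with the two notions of equivalence.

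For the second assertion, recall from the discussion preceding the theorem that $\xi$ is defined over $k$ if and only if $\cG_\xi(k)\neq\emptyset$. If $p\in\cG_\xi(k)$, it is by definition a $k$-form $(X_0,\xi_0)$ of $(X,\xi)$; setting $\Gamma=\underaut(X_0,\xi_0)$, a $k$-form of $G$, the quotient $X_0\to X_0/\Gamma=\bX_\xi$ is a ramified covering over $k$ which base-changes over $K$ to $X\to X/G$, so the projection descends to $\bX_\xi$. Conversely, suppose $X\to X/G$ descends to a ramified covering $Z\to\bX_\xi$ over $k$, compatibly with the given identification $(\bX_\xi)_K=X/G$, and in particular with the descent datum of $\bX_\xi$. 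Choose, for each $\sigma\in\gal(K/k)$, an isomorphism $f_\sigma\colon\sigma^*(X,\xi)\to(X,\xi)$ — these exist because $k$ is the field of moduli, and the resulting $2$-cocycle $f_\sigma\circ{}^\sigma\! f_\tau\circ f_{\sigma\tau}^{-1}\in G$ represents, up to the usual identifications, the class of the gerbe $\cG_\xi$. Both $f_\sigma$ and the descent datum $\rho_\sigma\colon\sigma^*X\to X$ of $Z$ lie over the descent datum of $\bX_\xi$, so $\rho_\sigma=g_\sigma f_\sigma$ for a unique $g_\sigma\in G$, and the strict cocycle condition for the $\rho_\sigma$ forces $(\sigma\mapsto g_\sigma)$ to trivialize that class. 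Hence $\cG_\xi$ is neutral, $\cG_\xi(k)\neq\emptyset$, and $\xi$ is defined over $k$.

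The main obstacle is the reconstruction statement of the second paragraph, together with its compatibility with base change and with Galois descent: one must show that everything in the universal family $\cX_\xi\to\cG_\xi$ beyond the bare twisted cover $\bX_\xi\dashrightarrow\cG_\xi$ is canonically determined, and this is exactly where normality of $X$ is indispensable — it is what makes $X$, hence $[X/G]$, the unique normal finite model of $X/G$ with the prescribed behaviour over the free locus, so that normalization supplies the reconstruction. The cocycle comparison in the converse half of the second assertion is then elementary, and the remaining verifications (functoriality of the compression, compatibility of the two equivalence relations, the fact that coarse spaces and normalizations commute with the base changes in play, and that the reconstructed pair $(\cG,\cX\to\cG)$ indeed has field of moduli exactly $k$) are routine bookkeeping.
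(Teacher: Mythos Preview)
Your reconstruction step in the first part has a genuine gap. You build a $k$-scheme $Z$ as ``the normalization of $Y$ in the twisted $G$-torsor over $k(Y)$'', carrying a $G$-action with $Z/G=Y$ and $Z_K\simeq X$. But a morphism $\spec k(Y)\to\cG$ into an abstract gerbe does not produce a finite \'etale $k(Y)$-algebra to normalize in: an object of $\cG$ is not a torsor unless $\cG$ is already neutral. More decisively, a scheme $Z$ over $k$ with a $G$-action and a $G$-equivariant isomorphism $Z_K\simeq X$ would exhibit a descent of $X$ (with its $G$-action) to $k$; equivalently, it would give a splitting of $1\to G\to\cN_Y\to\gal(K/k)\to 1$, which is exactly the condition that the structure be defined over $k$. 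So your intermediate $Z$ exists only in the trivial case, and $\cX\coloneqq[Z/G]$ is undefined in general.

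The paper avoids this by working with the distinctive subgroup $\cN_Y\subset\aut_k(X)$ and setting $\cX=[X/\cN_Y]\to[\spec K/\cN_Y]=\cG$ directly; the normalization idea you have in mind is also mentioned there, but carried out correctly as the \emph{relative} normalization of $Y\times\cG$ in $\spec k(Y)\to Y\times\cG$, which yields the stack $\cX$ over $\cG$ without ever passing through a $k$-scheme $Z$. Either route gives the inverse to compression. Your argument for the second assertion is essentially the paper's, rephrased in cocycle language: your $\rho_\sigma$'s are precisely the section $\gal(K/k)\hookrightarrow\cN_\xi$ furnished by $\aut(X/E)$, and the comparison $\rho_\sigma=g_\sigma f_\sigma$ is the statement that this section trivializes the extension, hence the gerbe. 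That part is fine once the first part is repaired.
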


Thanks to Theorem~\ref{theorem:GstrGtwist}, one can study twisted $G$-quotients directly, forgetting about the original structure. If one can prove that, for a given $G$, every twisted $G$-quotient $Y$ has a smooth (more generally, liftable cf. \cite[Definition 6.6]{giulio-angelo-moduli}) rational point, then we automatically get that every $G$-structure $\xi$ is defined over its field of moduli by the Lang-Nishimura theorem for tame stacks \cite[Theorem 4.1]{giulio-angelo-valuative} applied to $\bX_{\xi}\dashrightarrow\cG_{\xi}$, regardless of whether $\xi$ was the datum of a point, a divisor, a group structure or anything else. In fact, this is the case for most finite subgroups of both $\underaut(\PP^{1})=\PGL_{2}$ \cite{giulio-divisor} and $\underaut(\PP^{2})=\PGL_{3}$ \cite{giulio-p2}.

\subsection{Interpretation as cycle-structures} If, on the other hand, we find a twisted $G$-quotient $Y$ over $k$ whose associated structure is not defined over $k$, we can search for a meaningful structure whose compression is $Y$. This is the technique we used to construct examples of cycles on $\PP^{1}$ \cite{giulio-divisor} and on $\PP^{2}$ \cite{giulio-p2} \cite{giulio-points} not defined over their field of moduli. The next result says that, under some assumptions, it is always possible to give such an interpretation using $0$-cycles.

\begin{theorem}\label{theorem:cycles}
	Assume that $\cha k=0$, that $\underaut(X)$ is of finite type over $k$, and that there exists some finite extension $k'/k$ with a model $\fX'$ over $k'$ such that $\fX'(k')$ is dense.
	
	For every algebraic structure $\xi$ on $X$, there exists a $0$-cycle $Z$ on $X$ such that $(X,Z)$ is equivalent to $(X,\xi)$. 
\end{theorem}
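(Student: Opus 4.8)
\textbf{Proof strategy for Theorem~\ref{theorem:cycles}.}

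The plan is to use Theorem~\ref{theorem:GstrGtwist} to reduce the problem to a purely geometric question about twisted $G$-quotients, and then produce the desired $0$-cycle by spreading out a suitable collection of rational points. First I would set $G=\underaut(X,\xi)$ (a finite group since $\xi$ is an algebraic structure, and of order prime to $\cha k = 0$ automatically), so that $\xi$ is a $G$-structure and Theorem~\ref{theorem:GstrGtwist} gives the compression $\bX_\xi\dashrightarrow\cG_\xi$, a twisted $G$-quotient $Y$ of $X$ over $k=k_\xi$. By the theorem, it suffices to exhibit a $0$-cycle $Z$ on $X$ whose compression (i.e. the twisted $G$-quotient attached to the $G$-structure $Z$) is equivalent to $Y\dashrightarrow\cG_\xi$; equivalently, I need a $G$-invariant $0$-cycle on $X$ that is Galois-stable up to the $\sigma$-linear automorphisms realizing the twisted $G$-quotient structure, and whose stabilizer in $\underaut(X)$ is exactly $G$.

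The key construction is as follows. Working over $k'$ with the dense set of rational points $\fX'(k')$, I would pick a point $x\in\fX'(k')$ in general position — specifically, away from the (proper closed) ramification locus of $X\to X/G$ and away from the fixed loci of all elements of $\underaut(X)\setminus G$, which is possible by density since these are finitely many proper closed subsets (here I use that $\underaut(X)$ is of finite type over $k$, so it has finitely many components and the ``extra'' automorphisms form a closed subscheme whose fixed locus is proper). Then I would take the $G$-orbit $Gx$, a reduced $0$-cycle of length $|G|$ on $X_{k'}$, and average over $\gal(k'/k)$: more precisely, using the $\sigma$-linear automorphisms $f_\sigma$ of $X$ commuting with $X\to X/G\simeq Y_K\to Y$ provided by the twisted $G$-quotient structure on $Y$, I would form the $0$-cycle $Z=\sum_\sigma f_\sigma(Gx)$ over $K$, which by construction descends to a $0$-cycle on $X$ over $k$ (the $f_\sigma$ glue the $\gal(K/k)$-translates appropriately) and whose stabilizer is $G$ by the general-position choice. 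The compression of $(X,Z)$ is then the twisted $G$-quotient determined by the $f_\sigma$, which is exactly $Y$; hence $(X,Z)$ is equivalent to $(X,\xi)$.

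The main obstacle is the general-position argument and the bookkeeping needed to guarantee $\underaut(X,Z)=G$ exactly, rather than something larger. One must rule out that some automorphism outside $G$ happens to permute the finitely many points of $Z$; shrinking to a point $x$ avoiding all the relevant fixed loci handles the elements of finite order, but since $\underaut(X)$ need only be of finite type (not finite), one should argue that an automorphism stabilizing a sufficiently generic reduced $0$-cycle of the form $Gx$ must already normalize $G$ and then lie in $G$ — this is where the hypothesis that $X(k')$ is dense (so that a truly generic $x$ exists) and finite-typeness of $\underaut(X)$ (so that ``extra'' automorphisms are cut out by closed conditions) both get used. A secondary technical point is checking that the $0$-cycle $Z$ built from the $f_\sigma$-translates is genuinely defined over $k$ and independent of choices up to the equivalence of $G$-structures; this is a cocycle-type verification using that the $f_\sigma$ realize the descent datum of the twisted $G$-quotient $Y$, and I expect it to be routine given Theorem~\ref{theorem:GstrGtwist} and the characterization of twisted $G$-quotients in terms of $\sigma$-linear automorphisms commuting with $X\to Y$.
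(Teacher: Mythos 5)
Your proposal correctly reduces the problem to constructing a $0$-cycle $Z$ whose distinctive subgroup equals $\cN_\xi$ (equivalently, $\aut_k(X,Z)=\cN_\xi$), and your use of the density of $\fX'(k')$ to choose a suitable base point echoes the paper's Lemma~\ref{lemma:avoid}. However, the one-shot ``general position'' construction has a genuine gap, and it is precisely the gap that the paper's iterative argument is designed to close.

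The first problem is that ``avoiding the fixed loci of elements of $\underaut(X)\setminus G$'' is neither well-posed nor sufficient. Since $\underaut(X)$ is only assumed to be of finite type, it can be positive-dimensional, so $\aut_K(X)\setminus G$ is an infinite (often uncountable) set; there is no reason its union of fixed loci is a proper closed subset, and in any case the obstruction is not automorphisms \emph{fixing} $x$ but automorphisms \emph{permuting} the finite set $\sum_\sigma f_\sigma(Gx)$ — a $g\notin\cN_\xi$ preserves your $Z$ as long as $g(x)=f_\sigma(h(x))$ for some $\sigma$ and $h\in G$, and this is not controlled by any fixed-locus condition. Ruling out \emph{all} such $g$ simultaneously by a single generic choice would require an argument you do not give, and it is not clear it is even true. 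The second, more structural problem is that you build $Z$ in one step as a reduced cycle, whereas the key trick in the paper is to build $Z$ \emph{iteratively} as a sum $Z=\sum_i (i{+}1)Z_{\tau_i}$ of orbits carrying \emph{pairwise distinct multiplicities}. The distinct multiplicities force $\aut_k(X,Z_{i+1})=\aut_k(X,Z_i)\cap\aut_k(X,Z_{\tau_i})$, so each stage kills off at least the chosen bad automorphism $\tau_i$ without introducing cross-terms between orbits; a reduced cycle cannot achieve this. Finally, even granting the iteration, one needs a reason the descending chain $\aut_k(X,Z_i)$ stabilizes, which the paper supplies via a compactness argument (Zariski-noetherianity of $\underaut(X)$ for the $\aut_K$-part plus profiniteness of $\gal(K/k)$ for the Galois part). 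Your sketch has no analogue of either the multiplicity device or the termination argument, and these are the actual content of the proof.
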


\subsection{Cohomological criterion}

Using non-abelian cohomology, we give a criterion to study the existence of $G$-structures not defined over their field of moduli. Suppose that $K$ is separably closed, let $\fX$ be an integral algebraic space of finite type over $k$ with $\fX_{K}=X$, and $\fG$ a group scheme finite étale over $k$ with a faithful action on $\fX$ such that $\fG_{K}=G\subset\underaut(X)$. Let $\fN\subset \underaut(\fX)$ a subgroup sheaf which normalizes $\fG$, and $\fQ=\fN/\fG$ the quotient.

\begin{theorem}\label{theorem:cohomology}
	If the natural map $\H^{1}(k,\fN)\to\H^{1}(k,\fQ)$ is not surjective, then there exists a $G$-structure on $X$ with field of moduli $k$ which does not descend to $k$. If $\fN$ is the entire normalizer of $\fG$, the converse holds.
\end{theorem}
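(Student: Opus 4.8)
The plan is to translate the existence of a $G$-structure with field of moduli $k$ into the existence of a twisted $G$-quotient over $k$, via Theorem~\ref{theorem:GstrGtwist}, and then to recognize twisted $G$-quotients of $X$ as a torsor-theoretic object classified by a nonabelian cohomology set. Concretely, $\fX\to\fX/\fG$ is a model over $k$ of $X\to X/G$, and a twisted $G$-quotient is a twisted form over $k$ of the rational map $X/G\dashrightarrow\cB G$; equivalently, after choosing $\fX/\fG$ as a base point, it is a twisted form of $\fX/\fG$ equipped with the extra datum tracking the $\cB G$-structure. The group sheaf acting on this situation, i.e. the sheaf of automorphisms of the pair $(X/G\dashrightarrow\cB G)$ — automorphisms of $X/G$ lifting to $\sigma$-linear automorphisms of $X$ normalizing $G$ — is exactly $\fN/\fG=\fQ$ when $\fN$ is the full normalizer (and a subsheaf of it in general). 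Thus twisted $G$-quotients of $X$ over $k$, up to equivalence, are classified by $\H^1(k,\fQ)$, with the class of the "trivial" twisted $G$-quotient $\fX/\fG$ itself being the distinguished element.

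First I would make precise the classification of twisted $G$-quotients by $\H^1(k,\fQ)$: a twisted $G$-quotient is the same as a form of the $\fQ$-equivariant object $\fX\to\fX/\fG$ (with its $\fG$-action), and by descent such forms are a torsor under $\fQ$, giving the pointed bijection with $\H^1(k,\fQ)$. Second, I would show that a twisted $G$-quotient $Y\dashrightarrow\cG$ has the property that $X\to X/G$ descends to $Y$ — equivalently, by the second part of Theorem~\ref{theorem:GstrGtwist}, that the associated $G$-structure is defined over $k$ — precisely when the corresponding class in $\H^1(k,\fQ)$ lifts to $\H^1(k,\fN)$: descending the covering $X\to X/G$ to $Y$ amounts to promoting the $\fQ$-torsor to an $\fN$-torsor compatibly, since $\fN$ is exactly the sheaf of ($\sigma$-linear) automorphisms of $\fX$ normalizing $\fG$, i.e. the sheaf of automorphisms of the covering $\fX\to\fX/\fG$ itself. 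Third, combining these two steps: the map $\H^1(k,\fN)\to\H^1(k,\fQ)$ fails to be surjective iff there is a twisted $G$-quotient over $k$ not in the image, iff there is a twisted $G$-quotient over $k$ whose associated $G$-structure is not defined over $k$; and by Theorem~\ref{theorem:GstrGtwist} this is exactly a $G$-structure with field of moduli $k$ not descending to $k$. When $\fN$ is the full normalizer every twisted $G$-quotient arises this way and $\fN$ is genuinely the automorphism sheaf of the covering, so the converse holds; when $\fN$ is only a subsheaf of the normalizer one still gets the forward implication (the twisted $G$-quotients coming from $\fN$-torsors not lifting sub-$\fN$-torsors are not defined over $k$), but surjectivity of $\H^1(k,\fN)\to\H^1(k,\fQ)$ need not detect all obstructed structures.

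The main obstacle I expect is the careful identification of the relevant automorphism sheaves and the exactness bookkeeping: one must verify that the sheaf controlling deformations of the covering $\fX\to\fX/\fG$ (as opposed to deformations of the quotient alone) is $\fN$, that the natural map to deformations of the quotient is the reduction modulo $\fG$, and that under Theorem~\ref{theorem:GstrGtwist}'s dictionary "$X\to X/G$ descends to $\bX_\xi$" corresponds on the cohomology side to "the $\fQ$-class lifts to $\fN$." A secondary subtlety is keeping track of field of moduli versus mere descent: a twisted $G$-quotient over $k$ automatically has field of moduli contained in $k$, but one must check it is exactly $k$ (or pass to the field of moduli and argue there) so that the produced $G$-structure genuinely witnesses the field-of-moduli obstruction rather than a spurious descent failure; this is handled because the distinguished twisted $G$-quotient $\fX/\fG$ is defined over $k$, so any non-liftable class over $k$ has the same residual gerbe and hence field of moduli equal to $k$. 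Once the sheaf-theoretic identifications are in place, the cohomological statement is a formal consequence of the long exact sequence of pointed sets associated to $1\to\fG\to\fN\to\fQ\to1$ together with the classification theorem.
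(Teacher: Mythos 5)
Your proposal is correct and follows essentially the same strategy as the paper: translate $G$-structures to twisted $G$-quotients via Theorem~\ref{theorem:GstrGtwist}, identify $\fQ$ (when $\fN$ is the full normalizer) with the automorphism sheaf of $\fX/\fG\dashrightarrow\cB_k\fG$ so that twisted $G$-quotients are classified by $\H^1(k,\fQ)$, and observe that the associated gerbe has a $k$-point exactly when the $\fQ$-torsor lifts to $\fN$. The paper makes the classification step explicit via Lemma~\ref{lemma:autocover} and Proposition~\ref{prop:cohomology} (constructing the torsor $T$ concretely and using Romagny's equivariant twisting $[\fX/\fG]\times^{\fQ}T\to\cB_k\fG\times^{\fQ}T$), and the only imprecision in your sketch worth flagging is the remark that a non-liftable class ``has the same residual gerbe'' as the trivial twist --- the residual gerbes are different twists of $\cB G$; what is the same is their coarse moduli space $\spec k$, which is what guarantees the field of moduli is exactly $k$.
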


\subsection{Rational points on twisted quotients}

While Theorem~\ref{theorem:cohomology} is mostly useful to construct counterexamples by exhibiting an actual element of $\H^{1}(k,\fQ)$ which does not lift to $\fN$, it is harder to apply it in the other direction, namely showing that $\H^{1}(k,\fN)\to\H^{1}(k,\fQ)$ is surjective when $\fN$ is the entire normalizer of $\fG$. Usually, there is a more fruitful strategy for trying to show that, for fixed $G$, an arbitrary $G$-structure is defined over the field of moduli. Assume that $|G|$ is prime with $\cha k$.

Let $Y\dashrightarrow\cG$ be a twisted $G$-quotient over $k$. If we can find a smooth rational point $y\in Y(k)$, since $|G|$ is prime with $\cha p$ then by the Lang-Nishimura theorem for tame stacks \cite[Theorem 4.1]{giulio-angelo-valuative} we get that $\cG(k)\neq\emptyset$, hence the associated structure is defined over the field of moduli. Furthermore, the smoothness assumption can be relaxed, see \cite[\S 6]{giulio-angelo-moduli} \cite{giulio-tqs2}. It is then important to clarify under which conditions a closed subset $Z\subset X$ descends to a rational point (or more generally to a subspace) of every twisted $G$-quotient. We identify and study such conditions in \S \ref{sec:dist}.

\section{Algebraic structures}\label{sect:str}

Let us recall how we formalized the concept of algebraic structures in \cite{giulio-angelo-moduli}.

Denote by $\as$ the fibered category over $\aff$ whose objects over an affine scheme $S$ over $k$ are flat, finitely presented morphisms $Y\to S$, where $Y$ is an algebraic space. A \emph{category of structured spaces} over $k$ is a locally finitely presented fppf stack $\cM\to\aff$ with a faithful cartesian functor $\cM\to\as$.

The following definition is implicit in \cite{giulio-angelo-moduli}.

\begin{definition}
	An \emph{algebraic structure} over $X$ is the datum of a category of structured spaces $\cM\to\aff$ with an algebraic (cf. \cite[Definition 3.6]{giulio-angelo-moduli}) morphism $\spec K\to \cM$ whose composition with $\cM\to\as$ is the $K$-point of $\as$ corresponding to $X$.
\end{definition}

Given a structure $\xi$ on $X$, the field of moduli $k_{\xi}$ of $(X,\xi)$ is well defined \cite[Definition 3.11]{giulio-angelo-moduli}, and there is an algebraic stack $\cG_{\xi}$ together with a universal family $\cX_{\xi}\to\cG_{\xi}$ which parametrizes twisted forms of $(X,\xi)$. The stack $\cG_{\xi}$ is a gerbe of finite type over $k_{\xi}$ \cite[\S 3.1]{giulio-angelo-moduli}, and is called the residual gerbe of $(X,\xi)$. The residual gerbe is naturally equipped with a fully faithful morphism $\cG_{\xi}\to \cM$.

As we have said in the introduction, we regard two algebraic structures on $X$ as equivalent if they have the same field of moduli and isomorphic universal families over isomorphic residual gerbes.

Since the residual gerbe together with the composition $\cG_{\xi}\to\cM\to\as$ is itself a category of structured spaces, we get the following fact, which can be regarded as a shortcut for the definition of algebraic structure. The proof is straightforward.

\begin{lemma}\label{lemma:str}
	Mapping an algebraic structure $\xi$ to its universal family $\cX_{\xi}\to\cG_{\xi}$ defines a bijection between
	\begin{itemize}
		\item algebraic structures on $X$ up to equivalence, and
		\item pair of morphisms $p:\spec K\to\cG$, $\cX\to\cG$ with an isomorphism $\cX\times_{\cG}\spec K\xrightarrow{\sim} X$ where $\cG$ is a finite gerbe over a finite subextension of $K/k$ such that the induced action of $\underaut_{\cG}(p)$ on $X$ is faithful, up to equivalence.
	\end{itemize}
\end{lemma}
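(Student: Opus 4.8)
The plan is to unwind both sides of the claimed bijection into a common intermediate object and check that the passage back and forth is inverse to the construction of the residual gerbe. Recall that, by definition, an algebraic structure is a category of structured spaces $\cM\to\as$ together with an algebraic morphism $\spec K\to\cM$ over the $K$-point $X$ of $\as$; and the paper has recalled from \cite{giulio-angelo-moduli} that attached to such data is the residual gerbe $\cG_\xi\to k_\xi$, a finite gerbe over a finite subextension, with a fully faithful morphism $\cG_\xi\to\cM$, a universal family $\cX_\xi\to\cG_\xi$ pulled back from $\cM\to\as$, and a distinguished point $\spec K\to\cG_\xi$ lifting $\spec K\to\cM$. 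The faithfulness of the action of $\underaut_\cG(p)$ on $X$ is automatic here because $\cG_\xi\to\cM\to\as$ is faithful, so the residual gerbe lands in the right-hand side of the stated bijection. Conversely, given a tuple $(p\colon\spec K\to\cG,\ \cX\to\cG,\ \cX\times_\cG\spec K\xrightarrow{\sim}X)$ with $\cG$ a finite gerbe over a finite subextension and the $\underaut_\cG(p)$-action on $X$ faithful, one observes that $\cG$ together with the composite $\cG\to\as$ (classifying the family $\cX\to\cG$, viewed as a flat finitely presented morphism of algebraic spaces) is itself a category of structured spaces — the faithfulness hypothesis is exactly what makes $\cG\to\as$ faithful — and $p$ is an algebraic morphism $\spec K\to\cG$ lying over $X$, hence an algebraic structure. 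This is the construction alluded to in the sentence ``the residual gerbe together with the composition $\cG_\xi\to\cM\to\as$ is itself a category of structured spaces''.

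The core of the proof is then to check that these two assignments are mutually inverse up to the equivalence relation (same field of moduli, isomorphic universal families over isomorphic residual gerbes). In one direction: starting from a tuple $(p,\cX\to\cG,\ \cdots)$, forming the associated structure $\xi$ with $\cM=\cG$, and then taking the residual gerbe $\cG_\xi$, one must show $\cG_\xi\simeq\cG$ compatibly with the universal families. But $\cG$ is already a finite gerbe over a finite subextension of $K/k$ and $p$ is already an algebraic point, so the residual-gerbe construction of \cite[\S 3.1]{giulio-angelo-moduli} applied to the fully faithful (indeed identity-like) inclusion $\cG\hookrightarrow\cM=\cG$ returns $\cG$ itself, with field of moduli the given finite subextension and universal family $\cX\to\cG$; here I would cite the relevant uniqueness/minimality property of the residual gerbe from \cite{giulio-angelo-moduli}. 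In the other direction: starting from a structure $\xi$, passing to $(\,\spec K\to\cG_\xi,\ \cX_\xi\to\cG_\xi,\ \cX_\xi\times_{\cG_\xi}\spec K\xrightarrow{\sim}X)$, then re-forming a structure with $\cM=\cG_\xi$, and finally taking its residual gerbe, we land on $\cG_\xi$ again by the same argument; and the original $\xi$ and this new structure have, by construction, the same field of moduli and the same universal family $\cX_\xi\to\cG_\xi$, hence are equivalent — which is precisely what the equivalence relation on algebraic structures is designed to record.

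Finally I would note the compatibility on morphisms/equivalences so that the bijection is well defined on equivalence classes: an equivalence of structures is by definition an isomorphism of residual gerbes intertwining the universal families, which is exactly an isomorphism of the corresponding tuples, and conversely; so the correspondence descends to equivalence classes in both directions.

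The main obstacle is purely bookkeeping rather than conceptual: one must invoke, with care, the precise universal property of the residual gerbe from \cite[\S 3.1]{giulio-angelo-moduli} — namely that it is the minimal finite gerbe over a finite subextension through which the algebraic point factors, and that it is unchanged when $\cM$ is replaced by $\cG_\xi$ equipped with its tautological functor to $\as$ — and check that the faithfulness condition on $\underaut_\cG(p)$ in the Lemma matches the faithfulness built into ``category of structured spaces'' ($\cM\to\as$ faithful) plus ``algebraic morphism''. Since all of these are recorded in \cite{giulio-angelo-moduli} and the constructions are tautological once set up, the proof is indeed straightforward, as asserted.
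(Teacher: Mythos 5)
Your proposal is correct and follows exactly the route the paper itself hints at: the paper says only that ``the residual gerbe together with the composition $\cG_{\xi}\to\cM\to\as$ is itself a category of structured spaces'' and that the proof is straightforward, and your two-way unwinding of the definitions is the intended argument. The only place where a sentence or two more care would be welcome is in passing from ``$\underaut_{\cG}(p)$ acts faithfully on $X$'' to ``the cartesian functor $\cG\to\as$ is faithful'' (one uses that $\cG$ is a gerbe, so every object is fppf\dash locally isomorphic to $p$, and that $\underaut_{\cG}$ is finite \'etale), and in observing that $\cX\to\cG$ is automatically flat and finitely presented because any \'etale atlas of $\cG$ is the spectrum of a finite subextension of $K/k$, over which $\cX$ is a model of $X$.
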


\section{Proof of Theorem~\ref{theorem:GstrGtwist}}\label{sect:twist}

Consider the groups $\aut_{k}(X),\aut_{K}(X)$ of automorphisms of $X$ over $k$ and $K$ respectively. A $k$-automorphism of $X$ induces a $k$-automorphism of $K\subset\H^{0}(X,\cO)$, since $K$ is the separable closure of $k$ in $\H^{0}(X,\cO)$. Hence, there is a short exact sequence
\[1\to\aut_{K}(X)\to\aut_{k}(X)\to\gal(K/k)\]
where the image of the right arrow is the Galois group of the field of moduli of $X$, see \cite[Proposition 3.13]{giulio-angelo-moduli}.

For ease of notation, from now on we always assume that twisted quotients are defined over $k$, and that structures have field of moduli equal to $k$: by base changing, it is always possible to reduce to this case.

Let $G\subset\aut_{K}(X)$ be a finite group and $Y$ a twisted $G$-quotient of $X$ over $k$ with natural projection $q:X\to X/G\simeq Y_{K}\to Y$.

\begin{definition}
	The \emph{distinctive subgroup} $\cN_{Y}\subset\aut_{k}(X)$ of $Y$ is the subgroup of automorphisms $\phi$ of $X$ over $k$ such that the diagram
	
	\[\begin{tikzcd}
	X\ar[dr,swap,"q"]\ar[rr,"\phi"]		&	&	X\ar[dl,"q"]	\\
			&		Y		&
	\end{tikzcd}\]
	commutes. If $\xi$ is an algebraic structure on $X$, the distinctive subgroup of $\xi$ is the distinctive subgroup of its compression, and we simply write $\cN_{\xi}$.
\end{definition}

The composition $\cN_{Y}\subset\aut_{k}(X)\to\gal(K/k)$ is surjective by definition of twisted quotient. Furthermore, since $X$ is separated and integral, any automorphism of $X$ which commutes with $X\to X/G$ is an element of $G$. We thus get a short exact sequence
\[1\to G\to\cN_{Y}\to\gal(K/k)\to 1.\]

\begin{proposition}\label{prop:Gtwist}
	Let $Y$ be a twisted $G$-quotient of $X$ over $k$. The quotient stack $[\spec K/\cN_{Y}]$ is a finite étale gerbe over $k$ and $[X/\cN_{Y}]\to[\spec K/\cN_{Y}]$ together with the tautological morphism $\spec K\to[\spec K/\cN_{Y}]$ defines an algebraic structure with field of moduli equal to $k$ and compression equal to $Y$.
\end{proposition}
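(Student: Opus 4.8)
The plan is to verify the three parts of the claim in turn: that $\cG \eqdef [\spec K/\cN_Y]$ is a finite étale gerbe over $k$, that the pair $([X/\cN_Y]\to\cG,\ \spec K\to\cG)$ satisfies the hypotheses of Lemma~\ref{lemma:str} (hence is an algebraic structure on $X$), and that its field of moduli is $k$ with compression $Y$.

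First I would handle the gerbe claim. We have the short exact sequence $1\to G\to\cN_Y\to\gal(K/k)\to 1$. Since $|G|$ is prime to $\cha k$ and $G$ is finite, and $\gal(K/k)$ is profinite, the group $\cN_Y$ is a profinite group; one makes sense of $[\spec K/\cN_Y]$ as the limit of the quotient stacks $[\spec K/(\cN_Y/\cN_{Y,L})]$ over finite subextensions $K/L/k$ over which everything is defined, exactly as in \cite{giulio-angelo-moduli}. The tautological map $\spec K\to[\spec K/\cN_Y]$ is an fppf cover whose automorphism sheaf of the induced object is the constant-over-$K$ group $G$, so $[\spec K/\cN_Y]$ is a gerbe over its coarse space, which is $\spec k$ because $\cN_Y\to\gal(K/k)$ is surjective with the correct kernel; finite étale follows from $G$ being finite of order prime to $\cha k$. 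This is essentially bookkeeping once the profinite setup is in place.

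Next, to apply Lemma~\ref{lemma:str} I must check that $[X/\cN_Y]\to\cG$ is a well-defined morphism with an isomorphism $[X/\cN_Y]\times_{\cG}\spec K\xrightarrow{\sim}X$ and that the induced action of $\underaut_{\cG}(p)$ on $X$ is faithful. The morphism exists because $\cN_Y$ acts compatibly on $X$ and on $\spec K$ (the map $\aut_k(X)\to\gal(K/k)$); the fiber product over the tautological point $p$ is $X$ by construction of quotient stacks; and the action of $\underaut_{\cG}(p)\simeq G$ on $X$ is the inclusion $G\subset\aut_K(X)$, which is faithful by hypothesis. So Lemma~\ref{lemma:str} produces an algebraic structure $\xi$ on $X$ with universal family $\cX_\xi=[X/\cN_Y]\to[\spec K/\cN_Y]=\cG_\xi$.

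Finally I would identify the field of moduli and the compression. The field of moduli is the coarse space of $\cG_\xi$, which we have already seen is $\spec k$. The compression $\bX_\xi$ is by definition the coarse space of $\cX_\xi=[X/\cN_Y]$. Here the key computation is that the coarse space of $[X/\cN_Y]$ is $Y$: étale-locally over $Y$, after base change to $\spec K$ the stack $[X/\cN_Y]$ pulls back to $[X/G]$ whose coarse space is $X/G\simeq Y_K$, and the descent data are compatible because the action of $\cN_Y$ on $X$ descends, through $q$, to the identity on $Y$ (this is precisely the defining property of the distinctive subgroup $\cN_Y$). Hence the coarse space of $[X/\cN_Y]$ is canonically $Y$, and the induced rational map $\bX_\xi\dashrightarrow\cG_\xi$ is $Y\dashrightarrow\cG$. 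The one point requiring genuine care — the main obstacle — is the commutation of "taking the coarse space" with the fppf/pro-étale descent along $\spec K\to\spec k$ in the presence of the profinite group $\cN_Y$; this works because $G$ is finite étale (tameness), so coarse spaces commute with the relevant flat base change, and one reduces to a finite level $L/k$ where the statement is the classical fact that $[X/(\cN_Y/\cN_{Y,L})]$ has coarse space $Y$.
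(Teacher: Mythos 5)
Your proposal is correct and follows essentially the same route as the paper: base-change to $K$ to see that $[\spec K/\cN_Y]_K\simeq\cB G$ is a finite étale gerbe, compute $\spec K\times_{[\spec K/\cN_Y]}[X/\cN_Y]\simeq X$, and invoke Lemma~\ref{lemma:str}. The paper's proof is considerably terser and leaves the identification of the compression with $Y$ implicit (which you spell out, correctly, via coarse spaces commuting with flat base change); one minor slip on your end is attributing ``finite étale'' to $|G|$ being prime to $\cha k$ --- in fact it holds simply because $G$ is a finite constant group over the separably closed field $K$, whereas the prime-to-$p$ condition is what gives tameness, needed later for the Lang--Nishimura applications.
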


\begin{proof}
	The morphism $[\spec K/\cN_{Y}]\to\spec k$ is induced by $\spec K\to\spec k$, which is by construction $\cN_{Y}$-invariant. Notice that $[\spec K/\cN_{Y}]_{K}\simeq [\spec K/G]=\cB G$, hence $[\spec K/\cN_{Y}]$ is a finite étale gerbe. Furthermore, we have
    \[\spec K\times_{[\spec K/\cN_{Y}]}[X/\cN_{Y}]\simeq \spec K\times_{[\spec K/G]}[X/G]\simeq X.\]
	We conclude by applying Lemma~\ref{lemma:str}.
\end{proof}

\begin{corollary}\label{corollary:eq}
	Two algebraic structures $\xi,\xi'$ on $X$ with étale automorphism groups are equivalent if and only if their distinctive subgroups are equal. \qed
\end{corollary}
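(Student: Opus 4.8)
The plan is to reduce the statement to Proposition~\ref{prop:Gtwist} and the already-established correspondence between structures, universal families and compressions. First I would observe that the distinctive subgroup is manifestly an invariant of the equivalence class: since two equivalent structures have isomorphic universal families over isomorphic residual gerbes, their compressions — obtained as coarse moduli spaces of the universal families — are isomorphic as rational maps to the gerbe, hence the diagrams defining $\cN_\xi$ and $\cN_{\xi'}$ are identified. This gives the ``only if'' direction with essentially no work, so the content is the converse.

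For the ``if'' direction, suppose $\cN_\xi=\cN_{\xi'}=:\cN$. By definition the distinctive subgroup of $\xi$ is the distinctive subgroup $\cN_Y$ of its compression $Y=\bX_\xi$, and likewise for $\xi'$ with compression $Y'$. The first task is therefore to see that $\cN_Y=\cN_{Y'}$ forces $Y\simeq Y'$ as twisted $G$-quotients, i.e. via an isomorphism respecting the identifications with $X/G$; this should follow because the projection $X\to X/G\to Y$ exhibits $Y$ as the quotient of $X$ by the (geometric) action, and knowing exactly which $k$-automorphisms of $X$ descend to $Y$ pins down the descent datum on $X/G$ that produces $Y$ — in other words $Y$ is recovered as $[\spec K/\cN_Y]$-twisted form, and the GIT/coarse-space quotient $[X/\cN_Y]\to[\spec K/\cN_Y]$ coarsifies to exactly $Y\dashrightarrow\cG$. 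Then I would apply Proposition~\ref{prop:Gtwist}: it produces from $\cN=\cN_Y=\cN_{Y'}$ a single algebraic structure, namely $[X/\cN]\to[\spec K/\cN]$ with the tautological $\spec K$-point, whose field of moduli is $k$ and whose compression is $Y=Y'$. It remains to check that $\xi$ itself is equivalent to this canonical structure $[X/\cN]\to[\spec K/\cN]$, and likewise $\xi'$, whence $\xi\sim\xi'$ by transitivity.

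To see $\xi\sim[X/\cN]\to[\spec K/\cN]$, I would use Lemma~\ref{lemma:str}: an equivalence class of algebraic structure is determined by the pair $(\cX_\xi\to\cG_\xi,\ p\colon\spec K\to\cG_\xi)$ together with the iso $\cX_\xi\times_{\cG_\xi}\spec K\xrightarrow{\sim}X$. Since the automorphism group $\underaut_{\cG_\xi}(p)$ is the constant group $G$ (as $\xi$ is a $G$-structure, or more generally has étale automorphism group equal to the geometric one acting faithfully), the residual gerbe $\cG_\xi$ is a finite étale gerbe over $k$ banded by $G$, and its $K$-points together with Galois descent identify it with $[\spec K/\cN']$ for $\cN'$ the extension of $\gal(K/k)$ by $G$ recording the $\sigma$-linear automorphisms realizing $\sigma^*(X,\xi)\simeq(X,\xi)$; compatibility of the universal family then identifies this $\cN'$ with the automorphisms of $X$ over $k$ commuting with $X\to X/G\to\bX_\xi$, i.e. with $\cN_\xi=\cN$. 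This is the step I expect to be the main obstacle: carefully matching the abstract gerbe-theoretic data $(\cG_\xi,\cX_\xi,p)$ with the explicit quotient presentation $[X/\cN]\to[\spec K/\cN]$, making sure the identification with $X/G$ on the compression side corresponds to the $G$-banding on the gerbe side, and verifying that ``étale automorphism group'' is exactly what is needed for $\cN$ to sit in the short exact sequence $1\to G\to\cN\to\gal(K/k)\to1$ used by Proposition~\ref{prop:Gtwist}. Once that bookkeeping is in place the corollary is immediate. \qed
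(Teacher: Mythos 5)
Your plan is essentially the one the paper has in mind: the \qed signals that the author regards the corollary as an immediate byproduct of Lemma~\ref{lemma:str} and Proposition~\ref{prop:Gtwist}, and your ``only if'' direction is indeed immediate from the formulation of equivalence in Lemma~\ref{lemma:str}. For the ``if'' direction your detour through proving $Y\simeq Y'$ is not really needed: once you know the universal family is determined by $\cN_\xi$ you are done, and that determination is the whole content.

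The step you single out as ``the main obstacle'' --- identifying $\cN'\coloneqq\underaut_{k_\xi}(p)$ (the $\sigma$-linear automorphisms of $(X,\xi)$) with $\cN_\xi$ --- is indeed where the work lies, and calling it ``bookkeeping'' understates it slightly; as written you only get one inclusion. The phrase ``compatibility of the universal family'' gives $\cN'\subseteq\cN_\xi$: a $\sigma$-linear isomorphism of $(X,\xi)$ induces an automorphism of $\cX_\xi$ over $\cG_\xi$, hence of $\bX_\xi$, and so commutes with $X\to\bX_\xi$. The reverse inclusion needs its own argument. The quickest route is group-theoretic: both $\cN'$ and $\cN_\xi$ fit into short exact sequences $1\to G\to(\cdot)\to\gal(K/k_\xi)\to 1$ --- for $\cN_\xi$ this is the sequence displayed just before Proposition~\ref{prop:Gtwist}, and for $\cN'$ it holds since $\aut_K(X,\xi)=G$ and, by definition of the field of moduli, each $\sigma\in\gal(K/k_\xi)$ is realized by some $\sigma$-linear automorphism of $(X,\xi)$. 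Given the inclusion $\cN'\subseteq\cN_\xi$, the fact that both surject onto $\gal(K/k_\xi)$ with the same kernel $G$ forces equality. With $\cN'=\cN_\xi$ in hand, Galois descent identifies $(\cG_\xi,p,\cX_\xi)$ with $([\spec K/\cN_\xi],\text{taut},[X/\cN_\xi])$, so the universal family, hence the equivalence class, is determined by $\cN_\xi$. In short: correct route, one genuine step left open, and it closes in a few lines via the short exact sequences rather than by further bookkeeping.
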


\begin{corollary}\label{corollary:twisteq}
    The datum of a twisted $G$-quotient of $X$ over $k$ is equivalent to the datum of a twisted form $Y\dashrightarrow\cG$ of $X/G\dashrightarrow\cB G$ over $k$.
\end{corollary}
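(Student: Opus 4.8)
The plan is to unwind both sides of the claimed equivalence into the same data and exhibit mutually inverse constructions. On one side we have a twisted $G$-quotient: an algebraic space $Y$ over $k$ with an isomorphism $Y_K\simeq X/G$ such that every $\sigma\in\gal(K/k)$ is realized by a $\sigma$-linear automorphism of $X$ compatible with $X\to X/G\simeq Y_K\to Y$. On the other side we have a twisted form over $k$ of the rational map $X/G\dashrightarrow\cB G$, i.e. a rational map $Y\dashrightarrow\cG$ together with an isomorphism of its base change to $K$ with $X/G\dashrightarrow\cB G$, compatible with descent data.

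First I would go from a twisted $G$-quotient $Y$ to such a rational map. Given $Y$, form its distinctive subgroup $\cN_Y\subset\aut_k(X)$ and the short exact sequence $1\to G\to\cN_Y\to\gal(K/k)\to1$ established just above. By Proposition~\ref{prop:Gtwist}, $\cG\eqdef[\spec K/\cN_Y]$ is a finite étale gerbe over $k$ with $\cG_K\simeq\cB G$, and the universal family gives an algebraic structure whose compression is $Y$; by construction the compression map is a rational map $Y\dashrightarrow\cG$. Base changing to $K$ recovers $X/G\dashrightarrow\cB G$ by the computation $\spec K\times_{\cG}[X/\cN_Y]\simeq X$ in the proof of Proposition~\ref{prop:Gtwist}, and the Galois descent datum on $\cG$ induced by $[\spec K/\cN_Y]$ is exactly the one encoded by the $\sigma$-linear automorphisms in the definition of twisted $G$-quotient. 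So this direction is essentially a repackaging of Proposition~\ref{prop:Gtwist}.

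Conversely, given a twisted form $Y\dashrightarrow\cG$ of $X/G\dashrightarrow\cB G$ over $k$, I would recover the structure of twisted $G$-quotient on $Y$: the isomorphism $\cG_K\simeq\cB G$ induces $Y_K\simeq X/G$ on coarse spaces (using that $X/G$ is the coarse space of $[X/G]$, the pullback of $\cB G$ along $X/G\to\cB G$, and that coarse space formation commutes with the flat base change $\spec K\to\spec k$), and the descent datum making $Y\dashrightarrow\cG$ defined over $k$ furnishes, for each $\sigma$, a $2$-commutative square which on the level of the atlas $X\to X/G\to\cB G$ produces the required $\sigma$-linear automorphism of $X$ compatible with $X\to X/G\simeq Y_K\to Y$. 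One then checks these two assignments are inverse to each other and compatible with the notion of equivalence on both sides. The main obstacle I anticipate is bookkeeping rather than conceptual: carefully translating the $2$-categorical descent datum on the stacky map $Y\dashrightarrow\cG$ into honest $\sigma$-linear automorphisms of $X$ (choosing representatives, checking the cocycle condition matches, and verifying that the domain of definition of the rational map is respected by the descent), and conversely checking that the subgroup $\cN_Y$ produced from a descent datum is independent of the choices; both are routine once one is careful that $X$ separated and integral forces any automorphism commuting with $X\to X/G$ to lie in $G$, which rigidifies the choices up to $G$.
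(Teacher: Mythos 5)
Your forward direction is exactly the paper's: send a twisted $G$-quotient $Y$ to $X/\cN_Y\dashrightarrow[\spec K/\cN_Y]$, which is indeed a repackaging of Proposition~\ref{prop:Gtwist}.

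The backward direction has the right shape but you leave the only nontrivial point as an assertion, and you mislabel it as bookkeeping. A twisted form $Y\dashrightarrow\cG$ gives you, for each $\sigma\in\gal(K/k)$, a $\sigma$-linear automorphism of $\cG$ (and of $Y$) for free, but the thing you actually need is a $\sigma$-linear automorphism of $X$ over $Y$, and lifting from $\cG$ to the atlas $\spec K$ is not automatic: you must exhibit a $2$-isomorphism between the atlas $\spec K\to\cG$ and its $\sigma$-twist. The paper supplies this with a substantive geometric input: \emph{any two morphisms $\spec K\to\cG$ are isomorphic}, because $\cG$ is a finite étale gerbe and $K$ is separably closed. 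Concretely, picking an open $V\subset Y$ where the rational map is an honest morphism and setting $U\subset X$ to be its preimage, this fact gives $U\simeq V\times_\cG\spec K\simeq\sigma^*U$ over $V$, i.e.\ a $\sigma$-linear automorphism of $U$ over $V$. Your write-up never invokes this fact, and without it the ``$2$-commutative square on the level of the atlas'' you appeal to has no reason to exist. You also gesture at the domain-of-definition issue but do not resolve it: the paper extends the $\sigma$-linear automorphism from $U$ to all of $X$ using that $X$ is normal and hence is the integral closure of $Y$ in $k(U)=k(X)$; some such argument is needed, since the definition of twisted $G$-quotient asks for automorphisms of $X$, not of an open subset. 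So the strategy matches, but the two essential steps of the converse --- the gerbe-over-separably-closed-field fact and the normality extension --- are missing rather than merely deferred.
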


\begin{proof}
    If $Y$ is a twisted $G$-quotient over $k$, $X/\cN_{Y}\dashrightarrow [\spec K/\cN_{Y}]$ is a twisted form of $X/G\dashrightarrow \cB G$ over $k$. 
    
    Conversely, assume that $Y\dashrightarrow\cG$ is a twisted form of $X/G\dashrightarrow \cB G$ over $k$; let $\sigma\in\gal(K/k)$ be a Galois automorphism of $K/k$ and $V\subset Y$ be an open subset such that $V\to\cG$ is a morphism, write $U\subset X$ for the inverse image of $V$.
    
    Any two morphisms $\spec K\to\cG$ are isomorphic since $\cG$ is a finite étale gerbe and $K$ is separably closed. This implies that $V\times_{\cG} \spec K$ is isomorphic to both $U$ and $\sigma^{*}U$, hence we get an isomorphism $U\simeq\sigma^{*}U$ over $V$, or equivalently a $\sigma$-linear automorphism of $U$ over $V$. Since $X$ is normal, it coincides with the integral closure of $V$ in $U$; we thus get an induced $\sigma$-linear automorphism of $X$ over $Y$.
\end{proof}

The first part of Theorem~\ref{theorem:GstrGtwist} is a direct consequence of Lemma~\ref{lemma:str}, Proposition~\ref{prop:Gtwist} and Corollary~\ref{corollary:twisteq}.

If $Y\dashrightarrow\cG$ is a twisted form of $X/G\dashrightarrow\cB G$, a direct way of constructing the universal family $[X/\cN_{Y}]\to\cG=[\spec K/\cN_{Y}]$ without using distinctive subgroups is to consider the relative normalization of $Y\times \cG$ with respect to $\spec k(Y)\to Y\times\cG$.

Let us now look at the second part of Theorem~\ref{theorem:GstrGtwist}. If $\spec k\to\cG_{\xi}$ is a rational section, then $E\eqdef \spec k\times_{\cG_{\xi}}\cX_{\xi}$ with the composition $E\to\cX_{\xi}\to\bX_{\xi}$ defines a twisted form of $X\to X/G$ over $k$.

On the other hand, if $E\to\bX_{\xi}$ is a twisted form of $X\to X/G$, then $\aut(X/E)=\gal(K/k)\subset\cN_{\xi}$ defines a section of $\cN_{\xi}\to\gal(K/k)$, which in turn induces a morphism $\spec k\to\cG_{\xi}=[\spec K/\cN_{\xi}]$.

\section{Proof of Theorem~\ref{theorem:cycles}}

Let us construct the algebraic structure associated with a cycle $Z$ on $X$ with finite automorphism group scheme.

If $C\subset X$ is a reduced, irreducible closed subscheme and $S$ is a scheme over $k$, a twisted form of $C$ over $S$ is a flat, locally of finite type morphism $\cX\to S$ with a closed subscheme $\cC\subset \cX$ such that there exists a finite subextension $K/k'/k$, a scheme $S'$ over $k'$ and an fppf covering $S'\to S$ such that 
\[(\cX|_{S'}\times_{k'}K,\cC|_{S'}\times_{k'}K)\simeq (X\times_{K} S'_{K},C\times_{K} S'_{K}).\]
As expected, $(X,C)$ defines a twisted form of $(X,C)$: since $(X,C)$ descends to some finite subextension $K/k'/k$, then $(X\times_{k'}K,C\times_{k'}K)$ is isomorphic to $(X\times_{K}\spec K\otimes_{k'}K,C\times_{K}\spec K\otimes_{k'}K)$ and hence we may take $S'=S=\spec K$.

If $Z=\sum_{i}n_{i}C_{i}$ is a cycle, a twisted form of $Z$ over $S$ is a flat, locally of finite type morphism $\cX\to S$ with a formal sum $\sum_{i}n_{i}\cC_{i}$ where $\cC_{i}\subset\cX$ is a twist of $C_{i}$.

The residual gerbe $\cG_{Z}$ is the functor $S\mapsto$\{twisted forms of $Z$\}; if the group scheme of automorphisms $\underaut(X,Z)$ is finite then $\cG_{Z}$ is an algebraic stack which is a gerbe by \cite[Proposition 3.9]{giulio-angelo-moduli} and $\cX_{Z}\to\cG_{Z}$ is the corresponding universal family whose $S$-points are twisted forms $(\cX,\cZ)$ of $Z$ over $S$ with a section $S\to\cX$. The coarse moduli space of $\cG_{Z}$ is the spectrum of the field of moduli $k_{Z}$ of $Z$. 

\begin{lemma}
	If $\underaut(X,Z)=\aut_{K}(X,Z)$ is finite étale, then $\cN_{Z}=\aut_{k}(X,Z)\subset\aut_{k}(X)$.
\end{lemma}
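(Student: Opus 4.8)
The statement asserts that when $\underaut(X,Z)$ is finite étale (hence equals the constant group $\aut_K(X,Z)$), the distinctive subgroup $\cN_Z$ of the cycle-structure associated with $Z$ coincides with $\aut_k(X,Z)$. The natural strategy is to unwind both definitions and compare the short exact sequences they sit in. On one hand, by definition $\cN_Z$ fits in $1\to G\to\cN_Z\to\gal(K/k)\to 1$ where $G=\underaut(X,Z)=\aut_K(X,Z)$, and it consists of those $k$-automorphisms of $X$ commuting with the compression $X\to\bX_Z$. On the other hand, $\aut_k(X,Z)$ fits in $1\to\aut_K(X,Z)\to\aut_k(X,Z)\to\gal(K/k)$, with surjectivity onto $\gal(K/k)$ exactly because the field of moduli of $Z$ is $k$ (our standing assumption). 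So both groups are extensions of $\gal(K/k)$ by the same $G$, sitting inside $\aut_k(X)$; it suffices to prove the two subgroups of $\aut_k(X)$ are equal.

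\textbf{Key steps.} First I would identify the compression $\bX_Z$ explicitly. Since $\underaut(X,Z)$ is finite étale, the residual gerbe $\cG_Z$ is the gerbe $[\spec K/\aut_k(X,Z)]$ (using the description of twisted forms of $Z$ recalled just above, together with the fact that any two $K$-points of the gerbe are isomorphic as $K$ is separably closed), the universal family is $\cX_Z=[X/\aut_k(X,Z)]$, and hence the compression is the coarse space $\bX_Z=X/\aut_k(X,Z)=(X/G)/\gal(K/k)$, which is a $k$-form of $X/G$. Concretely, the map $X\to\bX_Z$ is the composite $X\to X/G\to \bX_Z$. Second, I would check the inclusion $\aut_k(X,Z)\subseteq\cN_Z$: any $\phi\in\aut_k(X,Z)$ preserves $Z$, hence descends to $X/G$ and then to the further quotient $\bX_Z$, so the distinctivity diagram commutes — thus $\phi\in\cN_Z$. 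Third, for the reverse inclusion $\cN_Z\subseteq\aut_k(X,Z)$: given $\phi\in\cN_Z$, it commutes with $q:X\to\bX_Z$. I need to deduce $\phi(Z)=Z$. The point is that $Z$ is recoverable from the covering $X\to\bX_Z$ together with the $G$-action: the fibers of $X\to X/G$ are the $G$-orbits, and $Z$ is a $G$-invariant cycle, so it is the pullback of a cycle (with $\QQ$-coefficients, or after dividing by orbit sizes) on $X/G$; more simply, $Z$ is determined by its pushforward to $\bX_Z$ and the fact that it is the reduced-structure pullback with the original multiplicities. Since $\phi$ lies over $\id_{\bX_Z}$ and permutes the $G$-orbits in each fiber, it carries each irreducible component $C_i$ of $Z$ to another component of $Z$ with the same image in $\bX_Z$; a multiplicity/counting argument (the components mapping to a fixed point of $\bX_Z$ all appear with the compatible multiplicities dictated by $Z$ being a twisted form) then forces $\phi$ to preserve the formal sum $\sum n_i C_i$, i.e. $\phi\in\aut_k(X,Z)$.

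\textbf{Main obstacle.} The subtle point is the reverse inclusion — specifically, showing that commuting with the coarse compression $X\to\bX_Z$ is enough to force preservation of $Z$, rather than merely permuting $G$-orbits in a way that might scramble components with different multiplicities. The clean way to handle this is to use that $\cX_Z\to\cG_Z$ is the \emph{universal} family of twisted forms of $Z$: pulling back along the $\gal(K/k)$-point $\spec K\to\cG_Z$ recovers $(X,Z)$ with its $G$-action, and the distinctive subgroup, being $\aut$ of the compression over $k$, acts on this data compatibly. I would phrase it as: the map $X\to\bX_Z$ together with the cycle $Z$ is a twisted form of $(X/G, \bar Z)$ where $\bar Z$ is the image cycle, and the constancy/finite-étale hypothesis guarantees $\aut$ of this enriched object equals $\aut$ of the bare covering — so $\cN_Z$, defined via the bare covering, already sees all of $Z$. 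Concretely this reduces to the elementary fact that for a finite group $G$ acting on an integral normal $X$, an automorphism of $X$ lying over $X/G$ is an element of $G$ (already invoked in the excerpt for $\cN_Y$), combined with the observation that $G$ itself preserves $Z$ by hypothesis, so the whole extension $\cN_Z$ does too. Once that identification is in place the equality $\cN_Z=\aut_k(X,Z)$ is immediate.
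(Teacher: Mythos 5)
Your easy inclusion $\aut_k(X,Z)\subseteq\cN_Z$ is essentially the paper's: since $g\in\aut_k(X,Z)$ preserves the twist datum, the compositions $c\circ g$ and $c$ to $\cX_Z$ are $2$-isomorphic, hence equal after composing to the coarse space $\bX_Z$. You have, however, a genuine gap in the reverse inclusion, which you yourself flag as the subtle point but do not actually close. The concluding sentence of your proposal --- that $\cN_Z\cap\aut_K(X)=G$ preserves $Z$, ``so the whole extension $\cN_Z$ does too'' --- is a non sequitur: preservation of $Z$ by the kernel of $\cN_Z\to\gal(K/k)$ says nothing about the $\sigma$-linear elements with $\sigma\neq 1$, and a priori such an element could permute $G$-orbits inside a single fiber of $X\to\bX_Z$ in a way that scrambles the multiplicities of $Z$. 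Your other suggested phrasing, that the finite-\'etale hypothesis ``guarantees $\aut$ of this enriched object equals $\aut$ of the bare covering,'' is precisely the statement of the lemma, so invoking it is circular. (Relatedly, your opening identification $\bX_Z=X/\aut_k(X,Z)$ is a consequence of the lemma via Proposition~\ref{prop:Gtwist}, not something available beforehand.)

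The paper's mechanism for the reverse inclusion is different and is where the finite-\'etale hypothesis is actually used. One first shows that $\cX_Z\to\bX_Z$ is \emph{birational}: since $X$ is integral and $\aut_K(X,Z)$ is a finite group acting faithfully, there is a dense $\aut_K(X,Z)$-invariant open $U\subset X$ on which the action is free; the corresponding open substack $\cU\subset\cX_Z$ has trivial automorphisms and is therefore an algebraic space, so it maps isomorphically to an open of $\bX_Z$. Now take $g\in\cN_Z$. The two compositions of $c\circ g$ and $c$ with $\cX_Z\to\bX_Z$ agree, so by birationality and surjectivity of $c$ the morphisms $c\circ g$ and $c$ agree (up to $2$-isomorphism) on a dense open, and by separatedness of $\cX_Z$ (inherited from $X$) they agree globally. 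Since $\cX_Z$ is by definition the universal family of twisted forms of $(X,Z)$, a global $2$-isomorphism $c\circ g\simeq c$ forces $g^*Z=Z$, i.e.\ $g\in\aut_k(X,Z)$. You should replace your orbit-and-multiplicity heuristic with this birationality argument.
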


\begin{proof}
	Let $\bX_{Z}$ be the compression, i.e. the coarse moduli space of $\cX_{Z}$, and denote by $c$ the morphism $X\to\cX_{Z}$. First, let us show that $\cX_{Z}\to\bX_{Z}$ is birational. Since $X$ is integral, there exists an open subset $U\subset X$ which is $\aut_{K}(X,Z)$-invariant and such that the action on $U$ is free; this induces an open subset $\cU\subset\cX_{Z}$. A morphism $s:S\to \cU$ corresponds to a twisted form $(\cX,\cZ)$ of $(X,Z)$ with a section $S\to\cX$ which lands in the subspace of $\cX$ corresponding to $U$. An automorphism of $s$ is an automorphism of the twisted form $(\cX,\cZ)$ which maps $s$ to itself. Since $s$ lands in the locus corresponding to $U$, and the action on $U$ is free, it is straightforward to check that such an automorphism is the identity. This implies that $\cU$ is an algebraic space, and thus $\cX_{Z}\to\bX_{Z}$ is birational. 
	
	Let us now prove the main statement. By definition, $\cN_{Z}$ is the group of $k$-automorphisms of $X$ which commute with the composition $X\to\cX_{Z}\to\bX_{Z}$. If $g$ is a $k$-linear automorphism of $(X,Z)$, then by definition of $\cX_{Z}$ the composition $c\circ g$ is $2$-isomorphic to $c$, so in particular $g\in\cN_{Z}$. On the other hand, if $g\in\cN_{Z}$, the two compositions of $c\circ g$ and $c$ with $\cX_{Z}\to\bX_{Z}$ are equal. Since $\cX_{Z}\to\bX_{Z}$ is birational and $c$ is surjective, we get that $c\circ g$ and $c$ are isomorphic on a dense open subset. Since $\cX_{Z}$ is separated (which follows from the fact that $X$ is separated) then $c\circ g$ and $c$ are isomorphic globally. By definition of $\cX_{Z}$, this implies that $g^{*}Z=Z$, hence $g\in\aut_{k}(X,Z)$.
\end{proof}	 

As a consequence, another way of constructing $\cX_{Z}\to\cG_{Z}$ is by considering the quotient stacks $[X/\aut_{k}(X,Z)]\to[\spec K/\aut_{k}(X,Z)]$ as in Proposition~\ref{prop:Gtwist}.

\begin{lemma}\label{lemma:avoid}
	Let $Y$ be a twisted $G$-quotient of $X$ over $k$ and $U\subset X$ a $\cN_{Y}$-invariant non-empty open subset, i.e. the inverse image of an open subset of $Y$. Assume that there exists a finite subextension $K/k'/k$ such that $Y(k')$ is dense. For every $\tau\in\aut_{k}(X)\setminus \cN_{Y}$, there exists a $\cN_{Y}$-invariant finite subset $Z\subset U(K)$ such that $\tau(Z)\neq Z$.
\end{lemma}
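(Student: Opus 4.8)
The goal is to produce, for a given $\tau \in \aut_k(X) \setminus \cN_Y$, a $\cN_Y$-invariant finite set $Z \subset U(K)$ that is not preserved by $\tau$. The basic idea is to pick a suitable point $z_0 \in U(K)$ whose $\cN_Y$-orbit is finite (i.e.\ contained in $U(K)$ and stable), and then take $Z$ to be that orbit; the problem reduces to arranging that $\tau(Z) \neq Z$. Since $\tau \notin \cN_Y$, the composition $q \circ \tau$ and $q$ disagree as maps $X \to Y$, so there is a dense open on which $q(\tau(x)) \neq q(x)$; I want to exploit this together with the density of $Y(k')$ to separate the $\cN_Y$-orbit of a well-chosen point from its $\tau$-image.

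**Key steps.** First I would observe that $\cN_Y$ acts on $X(K)$ with finite orbits: indeed $\cN_Y$ sits in the exact sequence $1 \to G \to \cN_Y \to \gal(K/k) \to 1$, and for any $x \in X(K)$ defined over a finite subextension $K/\ell/k$, the stabilizer of $x$ in $\cN_Y$ contains $\gal(K/\ell')$ for a suitable finite $\ell'$ (one large enough that $x$ and the relevant $\sigma$-linear automorphisms are all defined over $\ell'$), hence has finite index, so the orbit $\cN_Y \cdot x$ is finite. Thus for \emph{any} $z_0 \in U(K)$, the set $Z \eqdef \cN_Y \cdot z_0$ is a $\cN_Y$-invariant finite subset of $U(K)$ — the first two requirements come for free, and everything rests on choosing $z_0$ so that $\tau(Z) \neq Z$. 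Second, I would use that $Y(k')$ is dense to find a $k'$-point $\bar y \in Y(k')$ lying in the image of $U$ and avoiding the (closed, proper) locus where $q \circ \tau = q$ fails to be defined or where $q(\tau(x)) = q(x)$; more precisely, since $q \circ \tau$ and $q$ are distinct rational maps $X \dashrightarrow Y$, their equalizer is a proper closed subset $W \subsetneq X$, and its image in $Y$ (or rather the complement of a suitable dense open of $Y$) is avoided by a dense set of $k'$-points. Pick such a $\bar y$ and let $z_0 \in U(K)$ be any point in the fiber $q^{-1}(\bar y)$. Third, I claim $\tau(Z) \neq Z$: if it held, then $q(\tau(z)) \in q(Z) = \{q(z) : z \in Z\}$; but every $z \in Z$ has $q(z) = \sigma(\bar y)$ for some $\sigma \in \gal(K/k)$ (since $\cN_Y$ commutes with $q$ up to the Galois action on $Y$, and $\bar y$ is a $k'$-point, $q(Z)$ is a single Galois orbit of $\bar y$, in particular finite), whereas $q(\tau(z_0)) \neq q(z_0) = \bar y$ and is again a $k'$-point of a controlled form; a counting or a direct Galois-equivariance argument then shows $q(\tau(Z))$ cannot equal $q(Z)$, hence $\tau(Z) \neq Z$.

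**Main obstacle.** The delicate point is the third step: controlling $q(\tau(Z))$ well enough to conclude it differs from $q(Z)$. The subtlety is that $\tau$ need not commute with the Galois action in a way compatible with $\cN_Y$, so $\tau(Z)$ is a $\tau \cN_Y \tau^{-1}$-orbit rather than a $\cN_Y$-orbit, and one must ensure the chosen $\bar y$ genuinely escapes \emph{all} the finitely many "bad" loci — those coming from $q \circ \sigma \circ \tau = q$ as $\sigma$ ranges over a set of coset representatives of $G$ in $\cN_Y$ restricted to the relevant finite subextension. This is where the density hypothesis $Y(k') \text{ dense}$ is essential: it guarantees a $k'$-rational $\bar y$ exists outside the finite union of proper closed subsets of $Y$ cut out by these conditions. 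Once $\bar y$ is chosen outside all of them, a straightforward argument comparing the finite sets $q(Z)$ and $q(\tau(Z))$ — both Galois orbits of $k'$-points, but with distinct "base points" $\bar y$ and $q(\tau(z_0))$ by construction — yields $q(\tau(Z)) \neq q(Z)$ and therefore $\tau(Z) \neq Z$. I would also need the minor check that $U(K) \neq \emptyset$ and that the fiber $q^{-1}(\bar y)$ meets $U$, which follows since $U$ is $\cN_Y$-invariant, hence the inverse image under $q$ of a dense open of $Y$, and $\bar y$ can be taken in that open.
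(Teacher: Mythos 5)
The first two steps of your argument are sound in outline, but the third step---showing $\tau(Z) \neq Z$ from the choice of $\bar y$---contains a real gap that you yourself half-notice. The condition $q(\tau(z_0)) \neq q(z_0)=\bar y$ alone does not rule out $\tau(Z)=Z$, because $q(\tau(z_0))$ could very well be a nontrivial Galois conjugate of $\bar y$: the set $q(Z)$ is the entire $\gal(k'/k)$-orbit of $\bar y$, which in general has several elements. To carry your plan through you would have to arrange that $q(\tau(z_0))$ avoids the \emph{whole} Galois orbit of $\bar y$, not just $\bar y$ itself. Your ``main obstacle'' paragraph gestures at this, but the bad loci you describe (``$q\circ\sigma\circ\tau=q$ for $\sigma$ ranging over coset representatives'') are not the right ones: since any $\sigma\in\cN_Y$ satisfies $q\circ\sigma=q$, that equation collapses to $q\circ\tau=q$ and the dependence on $\sigma$ vanishes. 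The genuine issue lives on the side of $K$-points of $Y$ and their Galois conjugates, not on the side of morphism identities, and the ``counting or Galois-equivariance argument'' you defer is exactly the missing content.

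The paper's proof sidesteps all of this by staying upstairs in $X$ rather than descending to $Y$. Instead of trying to separate $q(\tau(Z))$ from $q(Z)$, one shows directly that some $x$ with $q\circ x\in Y(k')$ satisfies $\tau\circ x\neq\phi\circ x$ for \emph{every} $\phi\in\cN_Y$, so that $\tau(x)$ is simply not in the $\cN_Y$-orbit $Z$ of $x$, giving $\tau(Z)\neq Z$ immediately. Existence of such $x$ is by contradiction: if it failed, then for every $x$ with $q\circ x\in Y(k')$ there would be $\phi_x\in\cN_Y$ with $\tau\circ x=\phi_x\circ x$; applying $q$ and using $q\circ\phi_x=q$ gives $q\circ\tau\circ x=q\circ x$ on a dense set of points (density of $Y(k')$ pulls back), hence $q\circ\tau=q$ and $\tau\in\cN_Y$, a contradiction. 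No Galois-orbit bookkeeping is needed. If you want to keep your ``pick $\bar y$, then pick $z_0$ above it'' framing, you should replace the comparison of $q(\tau(Z))$ with $q(Z)$ by this upstairs argument.

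One smaller remark on your first step: the phrase ``the stabilizer of $x$ in $\cN_Y$ contains $\gal(K/\ell')$'' is not literally meaningful, since the extension $1\to G\to\cN_Y\to\gal(K/k)\to1$ need not split. The clean way to see finiteness of the orbit is that $\cN_Y\cdot x$ is contained in the union of the fibres of $q$ over the (finite) Galois orbit of $q(x)$, and each such fibre is a single $G$-orbit of $K$-points, hence finite. In the paper's proof this is automatic for the specific $x$ constructed, whose image in $Y$ is $k'$-rational.
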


\begin{proof}
	Let $q:X\to X/G=Y_{K}\to Y$ be the composition, by hypothesis $q\circ\tau\neq\tau$. Up to enlarging $k'$, we may assume that $k'/k$ is Galois. We may find a finite \emph{subset} $H\subset\cN_{Y}$ such that $H\to\gal(k'/k)$ is surjective. Notice that we cannot impose that $H$ is a subgroup, but fortunately this is not necessary.
	
	If $x:\spec K\to U$ is a point such that $q\circ x:\spec K\to Y$ is $k'$-rational and $\phi\in\cN_{Y}$, $h\in H\subset\cN_{Y}$ are elements with equal images in $\gal(k'/k)$, then clearly
	\[q\circ\phi\circ x=q\circ h\circ x:\spec K\to Y.\]
	
	Since $\tau$ is not in $\cN_{Y}$, I claim that there exists a point $x:\spec K\to U$ whose image in $Y$ is $k'$-rational and such that $\tau \circ x\neq \phi\circ x$ for every $\phi\in\cN_{Y}$. If by contradiction this is false, then for every $x\in U(K)$ whose image in $Y$ is $k'$-rational we may choose $h_{x}\in H$ such that $q\circ \tau\circ x=q\circ h_{x}\circ x$. Since $H$ is finite and $k'$-rational points are dense, this implies that there exists an element $h\in H$ such that $q\circ \tau=q\circ h$. Since $h\in H\subset\cN_{Y}$, then $q\circ h=q$ and thus $q\circ\tau=q$, which is absurd.
	
	Let $x:\spec K\to U$ be a point whose image in $Y$ is $k'$-rational and such that $\tau \circ x\neq \phi\circ x$ for every $\phi\in\cN_{Y}$, and denote by $Z$ its $\cN_{Y}$-orbit. Clearly, $Z$ is $\cN_{Y}$-invariant and $\tau(Z)\neq Z$ since $\tau\circ x\in \tau(Z)$ and $\tau\circ x\not\in Z$.
\end{proof}

Let us now prove Theorem~\ref{theorem:cycles}. Thanks to Corollary~\ref{corollary:eq} and since we are in characteristic $0$, it is enough to find a $0$-cycle $Z$ on $X$ such that $\aut_{k}(X,Z)=\cN_{\xi}$.

Let $Z_{0}$ be the empty $0$-cycle, and define a $\cN_{\xi}$-invariant cycle $Z_{i}$ by recursion as follows. If $\aut_{k}(X,Z_{i})=\cN_{\xi}$, then $Z_{i+1}=Z_{i}$. Otherwise, choose $\tau\in \aut_{k}(X,Z_{i})\setminus \cN_{\xi}$ and define
\[Z_{i+1}=Z_{i}+(i+1)Z_{\tau},\]
where $Z_{\tau}\subset X(K)$ is a $\cN_{\xi}$-invariant subset such that $\tau(Z_{\tau})\neq Z_{\tau}$ and such that the support of $Z_{\tau}$ is disjoint from the support of $Z_{i}$, it exists by Lemma~\ref{lemma:avoid}. By construction, the coefficients of $Z_{i}$ are at most $i$ while the coefficients of $Z_{\tau}$ are equal to $i+1$. This implies that
\[\aut_{k}(X,Z_{i+1})=\aut_{k}(X,Z_{i})\cap\aut_{k}(X,Z_{\tau}).\]
In particular, $\cN_{\xi}\subset\aut_{k}(X,Z_{i+1})$ and $\tau\not\in\aut_{k}(X,Z_{i+1})$, hence $\aut_{k}(X,Z_{i+1})\subsetneq\aut_{k}(X,Z_{i})$ if $\aut_{k}(X,Z_{i})$ is different from $\cN_{\xi}$.

The group $\aut_{k}(X)$ is an extension of $\gal(K/k)$ by $\aut_{K}(X)=\underaut(X)(K)$. The former is a compact group with respect to the pro-finite topology, while the latter is compact with respect to the Zariski topology since $\underaut(X)$ is noetherian. Notice that the subgroups $\aut_{k}(X,Z_{i})\cap\aut_{K}(X)=\aut_{K}(X,Z_{i})$ and $\im(\aut_{k}(X,Z_{i})\to\gal(K/k))=\gal(K/k_{Z_{i}})$ are both closed. This implies that the chain $\aut_{k}(X,Z_{i})\supseteq\aut_{k}(X,Z_{i})$ is eventually stable, i.e. $\aut_{k}(X,Z_{i})=\cN_{\xi}$ for some $i>>0$. This concludes the proof of Theorem~\ref{theorem:cycles}.

\section{Proof of Theorem~\ref{theorem:cohomology}}

Let $\fG$ be a finite, étale group scheme over $k$ which acts faithfully on an algebraic space $\fX$ of finite type over $k$. Write $X=\fX_{K}$, $G=\fG_{K}$. We are going to show that twisted $G$-quotients of $X$ are obtained by twisting $\fX/\fG\dashrightarrow\cB_{k}\fG$ with some torsor.

Let $\underaut(\fX)$ be the sheaf of automorphisms of $\fX$. If $\fX$ is projective, this is representable, but we don't need this assumption. Let $\fN\subset\underaut(\fX)$ be a subgroup sheaf which normalizes $\fG$, and write $\fQ$ for the quotient $\fN/\fG$.

\begin{lemma}\label{lemma:autocover}
	If $\fN$ is the entire normalizer of $\fG$, the sheaf of groups $\fQ$ is isomorphic to the fppf sheaf $\fF$ of automorphisms of $\fX/\fG$ for which, fppf locally, there exists an automorphism of $\cB_{k}\fG$ making the obvious diagram $2$-commutative.
\end{lemma}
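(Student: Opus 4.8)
The plan is to construct mutually inverse maps between $\fQ$ and $\fF$ after passing to fppf sheaves, and then to invoke that $\fG$ is normal in $\fN$ precisely when $\fN$ is the full normalizer. First I would construct a map $\fN\to\fF$: an element $n\in\fN(S)$ is an automorphism of $\fX_S$ normalizing $\fG_S$ (fppf locally), so it descends to an automorphism $\bar n$ of $(\fX/\fG)_S$; and since $n$ normalizes $\fG$, conjugation by $n$ induces an automorphism $c_n$ of $\fG_S$, i.e. an automorphism of $\cB_k\fG$ over $S$, and the square
\[
\begin{tikzcd}
\fX_S \ar[r,"n"]\ar[d] & \fX_S\ar[d]\\
(\fX/\fG)_S\ar[r,"\bar n"] & (\fX/\fG)_S
\end{tikzcd}
\]
together with $c_n$ is $2$-commutative by construction. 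This map $\fN\to\fF$ kills exactly $\fG$: if $n$ acts trivially on $\fX/\fG$ then, since $\fX\to\fX/\fG$ is an fppf $\fG$-torsor and $\fX$ is separated over the base (because $X$ is separated, so $\fX$ is, after the reductions in Section~\ref{sect:twist}), $n$ must be a section of $\fG$. Hence $\fN\to\fF$ factors through an injection $\fQ=\fN/\fG\hookrightarrow\fF$ of fppf sheaves.

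Next I would show surjectivity of $\fQ\to\fF$, which is where the full-normalizer hypothesis enters. Given an fppf-local section $\psi$ of $\fF$ over $S$, by definition there is, fppf-locally on $S$, an automorphism $\alpha$ of $\cB_k\fG$ (equivalently an automorphism of the group scheme $\fG$, up to inner ones) making the diagram $2$-commute with $\psi$. The $2$-commutative square expresses $\fX_S$, viewed as a $\fG$-torsor over $(\fX/\fG)_S$, as isomorphic to the $\alpha$-twist of the pullback $\psi^*\fX_S$; choosing such an isomorphism produces an automorphism $n$ of the algebraic space $\fX_S$ lying over $\psi$ and intertwining the $\fG$-action with itself via $\alpha$. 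Thus $n$ normalizes $\fG_S$, so $n\in\fN(S)$ because $\fN$ is the \emph{entire} normalizer — this is exactly the step that fails for a proper subgroup sheaf $\fN$. Therefore $\psi$ is in the image of $\fN(S)\to\fF(S)$ after an fppf refinement, i.e. $\fQ\to\fF$ is an epimorphism of fppf sheaves, hence an isomorphism.

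The main obstacle I anticipate is the surjectivity step, specifically lifting an fppf-local automorphism $\psi$ of $\fX/\fG$ (with its compatible action on $\cB_k\fG$) to an honest automorphism of $\fX$: one must check that the $2$-commutativity datum really does give an isomorphism of $\fG$-torsors over $(\fX/\fG)_S$ (after the $\alpha$-twist), and that the resulting $n$ is a morphism of algebraic spaces, not merely of sheaves — here one uses that $\fX\to\fX/\fG$ is finite étale (the order of $\fG$ is prime to $\cha k$) so descent of the torsor isomorphism is unproblematic, and that $\fX$ is separated and normal so the lift is uniquely determined on a dense open and extends. I would also note that the ambiguity in the choice of $\alpha$ (inner automorphisms of $\fG$) is absorbed by the quotient $\fN/\fG$, which is why the target is $\fF$ rather than a sheaf of automorphisms of $\fG$ itself. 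The remaining verifications — that the constructions are compatible with base change, hence define maps of fppf sheaves, and that the two maps are mutually inverse — are routine.
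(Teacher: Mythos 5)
Your proof takes essentially the same route as the paper: construct the injection $\fQ\hookrightarrow\fF$ (kernel $\fG$ because $\fX$ is separated and integral), then for surjectivity extract from the $2$-commutativity datum an fppf-local automorphism of $\fX$ lying over the given automorphism of $\fX/\fG$, observe that it normalizes $\fG$ and hence lies in $\fN$ by the full-normalizer hypothesis, and descend its well-defined image in $\fQ$. The one imprecision worth flagging is that $\fX\to\fX/\fG$ is a $\fG$-torsor (and finite \'etale) only over the free locus, not globally; but you correctly note afterwards that the lift is first produced on a dense open and then extended to all of $\fX$ using normality and separatedness, which is the same implicit ingredient in the paper's terser argument.
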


Notice that the particular automorphism of $\cB_{k}\fG$ is \emph{not} part of the definition of $\fF$, we only require existence.

\begin{proof}
	There is an obvious injective map $\fQ\to\fF$, let us show that it is surjective. If $S$ is a scheme over $k$ and $f$ is an automorphism of $\fX_{S}/\fG_{S}$ in $\fF(S)$, the hypothesis implies that there exists an fppf covering $S'\to S$ with an automorphism $h$ of $\fX_{S'}$ which lifts $f_{S'}$. The fact that $h$ lifts an automorphism of $\fX_{S'}/\fG_{S'}$ implies that $h\in\fN(S')$. While the class of $h$ is not well defined and depends on a choice, its image $q_{0}\in\fQ(S')$ is unique. The uniqueness of $q_{0}$ implies that it descends to an element $q\in\fQ(S)$. It is straightforward to check that the image of $q$ in $\fF$ is $f$.
\end{proof}

Let $T\to\spec k$ be a $\fQ$-torsor over $k$, it corresponds to a morphism $\spec k\to\cB_{k}\fQ$. Define a twisted form $\cX\to\cG$ of $[X/G]\to\cB_{K}G$ as the fibered products
\[\begin{tikzcd}
    \cX\rar\dar     &   \cG\rar\dar     &   \spec k\ar[d,"T"]    \\
    \left[\fX / \fN\right]\rar  &   \cB_{k}\fN\rar  &   \cB_{k}\fQ.
\end{tikzcd}\]
Since $T$ does not lift to $\fN$, then $\cG(k)=\emptyset$ and hence the corresponding structure is not defined over $k$.

Let us give another construction. There are actions of $\fQ$ on both $[\fX/\fG]$ and on $\cB_{k}\fG=[\spec k/\fG]$, and using Romagny's theory of group actions of stacks \cite{romagny} it is possible to define twists
\[[\fX/\fG]\times^{\fQ}T=[([\fX/\fG]\times T)/\fQ],~\cB_{k}\fG\times^{\fQ}T=[(\cB_{k}\fG\times T)/\fQ],\]
with an induced morphism
\[[\fX/\fG]\times^{\fQ}T\to \cB_{k}\fG\times^{\fQ}T\]
whose coarse moduli space is a twisted $G$-quotient. 

By definition, $\cB_{k}\fG\times^{\fQ}T$ is the gerbe of liftings of $T$ to $\fN$. If $T$ is trivial, it is immediate to check that $[\fX/\fG]\times^{\fQ}T\simeq [X/\fG]$, hence in general $[\fX/\fG]\times^{\fQ}T$ is a twisted form of $[X/\fG]$.

If $T$ does not lift to $\fN$, the coarse moduli space of $[\fX/\fG]\times^{\fQ}T$ is a twisted $G$-quotient whose associated gerbe has no sections, this proves the first part of Theorem~\ref{theorem:cohomology}. The second part is a direct consequence of the following Proposition~\ref{prop:cohomology}.

\begin{proposition}\label{prop:cohomology}
	Assume that $\fN$ is the entire normalizer of $\fG$. Every twisted $G$-quotient is the coarse moduli space of a twist
	\[[\fX/\fG]\times^{\fQ}T\to \cB_{k}\fG\times^{\fQ}T\]
	for some $\fQ$-torsor $T\to\spec k$.
\end{proposition}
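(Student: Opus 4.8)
The strategy is to reverse the construction given just above: from an arbitrary twisted $G$-quotient $Y$ over $k$ we must manufacture a $\fQ$-torsor $T$ whose associated twist recovers $Y$. Let $Y$ be a twisted $G$-quotient of $X$, and let $Y \dashrightarrow \cG$ be the corresponding twisted form of $X/G \dashrightarrow \cB G = \cB_K G$ (Corollary~\ref{corollary:twisteq}); equivalently we have the universal family $\cX = [X/\cN_Y] \to \cG = [\spec K/\cN_Y]$ from Proposition~\ref{prop:Gtwist}. The first step is to produce a canonical $2$-commutative square relating $\cG$ and $\cX$ to $\cB_k\fN$ and $[\fX/\fN]$. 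For this, I would observe that the composite $X \to X/G = Y_K \to Y$ is $\cN_Y$-invariant (by definition of $\cN_Y$), so it descends to a morphism $[X/\cN_Y] \to Y$, and the gerbe $\cG = [\spec K/\cN_Y]$ carries the action of $\cN_Y/G \simeq \gal(K/k)$ together with the subtler structure coming from $\fN$.

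\textbf{Key steps.} (1) Build a map of gerbes $\cG \to \cB_k\fG$ refining $\cG \to \spec k$. Concretely, $\fX/\fG$ is an algebraic space over $k$ with $(\fX/\fG)_K \simeq X/G \simeq Y_K$, and $Y$, being a twisted $G$-quotient, comes with a compatible system of $\sigma$-linear automorphisms of $X$; combined with the identification $\fX_K = X$, $\fG_K = G$, these assemble into an fppf-local isomorphism $Y \simeq \fX/\fG$ after an extension. More usefully, the morphism $X \to \fX/\fG$ (base-changed to $K$ and composed with $X_K \to X$) together with the $\cN_Y$-action exhibits $[X/\cN_Y]$ as a twisted form of $[\fX/\fG]$, and $\cG = [\spec K/\cN_Y]$ as a twisted form of $\cB_k\fG$ in which the band is an inner form (since $\fG$ is already defined over $k$, the Galois action on $\fG$ is trivial and the twisting happens through $\fN$). (2) Use Lemma~\ref{lemma:autocover}: because $\fN$ is the full normalizer, $\fQ = \fN/\fG$ is exactly the sheaf $\fF$ of automorphisms of $\fX/\fG$ which fppf-locally lift to automorphisms of $\cB_k\fG$. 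Now a twisted $G$-quotient $Y$, being fppf-locally isomorphic to $\fX/\fG$ in a way compatible with the covering maps to $\cB_k\fG$ (this compatibility is precisely what "twisted form of $X/G \dashrightarrow \cB G$" encodes), defines a class in $\H^1$ of the sheaf of such local isomorphisms modulo automorphisms, i.e. a torsor under $\fF = \fQ$. Call this torsor $T$. (3) Check that $[\fX/\fG] \times^{\fQ} T \to \cB_k\fG \times^{\fQ} T$ has coarse space $Y$: by construction both sides are fppf-locally isomorphic to $[\fX/\fG] \to \cB_k\fG$ with transition data given by $T$, so the coarse moduli space of the left-hand stack is fppf-locally $\fX/\fG$ with transition data the image of $T$ in $\mathrm{Aut}(\fX/\fG)$, which is exactly the descent datum defining $Y$. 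Since coarse moduli spaces commute with flat base change in this tame setting, this identifies the coarse space with $Y$.

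\textbf{Main obstacle.} The delicate point is Step (2): extracting a genuine $\fQ$-torsor from the twisted-form data. The subtlety is that $Y$ a priori only remembers the twist of the \emph{quotient} $X/G$, not of the stack $[X/G]$ — but the definition of twisted $G$-quotient (via Corollary~\ref{corollary:twisteq}) does carry the extra information of the map to $\cG$, and one must verify that the local isomorphisms $Y|_{U} \simeq (\fX/\fG)|_{U}$ can be chosen compatibly with the projections to $\cB_k\fG$, so that the ambiguity in the gluing is measured by $\fF$ and not by the larger $\underaut(\fX/\fG)$. This is where the hypothesis that $\fN$ is the full normalizer is essential, via Lemma~\ref{lemma:autocover}: it guarantees $\fF = \fQ$, so the cocycle automatically takes values in $\fQ$. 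Once the torsor $T$ is in hand, one should also confirm well-definedness — that different choices of local trivializations give cohomologous cocycles, hence isomorphic $T$ — and that the passage $Y \mapsto T \mapsto$ (coarse space of the twist) is the identity, which is the routine bookkeeping in Step (3). A minor additional check is that all the stack quotients involved are tame (automorphism groups of order prime to $\cha k$, which holds since $\fG$ is étale over $k$ and, after the reduction at the start of \S\ref{sect:twist}, we are in the setting where $|G|$ is prime with $\cha k$), so that forming coarse moduli spaces behaves well under the twisting operation.
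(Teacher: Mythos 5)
Your approach is essentially the same as the paper's: use Lemma~\ref{lemma:autocover} to identify $\fQ$ with the sheaf $\fF$ of ``liftable'' automorphisms of $\fX/\fG$, and then extract a $\fQ$-torsor $T$ from the twisted $G$-quotient $Y\dashrightarrow\cG$. Where you differ is the construction of $T$. You propose to build $T$ as a $\H^1$-class (a cocycle from transition data among local trivializations $Y|_U\simeq(\fX/\fG)|_U$), and you correctly flag the resulting bookkeeping (well-definedness, independence of trivializations, compatibility with the maps to $\cB_k\fG$) as the delicate point. The paper sidesteps all of this by defining $T$ directly as a sheaf: $T(S)$ is the set of isomorphisms $\fX_S/\fG_S\xrightarrow{\sim}Y_S$ that fppf-locally fit into a $2$-commutative square with an isomorphism $\cB_{S'}\fG_{S'}\xrightarrow{\sim}\cG_{S'}$ (the covering and the square are not part of the datum, only their existence is required). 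Then $T$ is a subsheaf of $\underisom(\fX/\fG,Y)$, the precomposition action of $\fQ$ preserves it, and Lemma~\ref{lemma:autocover} immediately gives simple transitivity --- so $T$ is a $\fQ$-torsor with no choice of trivializations involved. Your cocycle formulation is equivalent, but the paper's sheaf-theoretic definition is cleaner and dissolves exactly the obstacle you describe.

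One small inaccuracy to fix: you write that ``since $\fG$ is already defined over $k$, the Galois action on $\fG$ is trivial.'' That is only true if $\fG$ is a \emph{constant} group scheme; a finite étale $\fG$ over $k$ in general induces a nontrivial Galois action on $\fG(K)=G$. Fortunately this remark is an aside and does not affect the argument, since the proof goes through regardless of whether the band twisting is inner.

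Also, your step (1) starts by trying to ``build a map of gerbes $\cG\to\cB_k\fG$,'' which generally does not exist --- indeed, the obstruction to such a map is precisely what the torsor $T$ measures. You correct course mid-paragraph to say only that $\cG$ is a twisted form of $\cB_k\fG$, which is the right statement, but the opening phrasing is misleading and worth revising.
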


\begin{proof}
	Let $Y\dashrightarrow\cG$ be a twisted $G$-quotient over $k$. Define a category fibered in sets $T$ as follows. If $S$ is a scheme over $k$, $T(S)$ is the set of isomorphisms 
	\[\fX_{S}/\fG_{S}\xrightarrow{\sim}Y_{S}\]
	for which there exists an fppf covering $S'\to S$ and a $2$-commutative diagram
	\[\begin{tikzcd}
		\fX_{S'}/\fG_{S'}\dar\ar[r,dashed]	&	\cB_{S'}\fG_{S'}\dar		\\
		Y_{S'}\ar[r,dashed]	&	\cG_{S'}
	\end{tikzcd}\] 
	where the vertical arrows are isomorphisms. We stress that the fppf covering and the $2$-commutative diagram are \emph{not} part of the datum, we are only selecting the isomorphisms for which such a diagram exists. It is clear that $T$ is a subsheaf of the sheaf of isomorphisms between $\fX/\fG$ and $Y$. The action of $\fQ$ on $\fX/\fG$ induces an action on $T$, and Lemma~\ref{lemma:autocover} implies that $T$ is a $\fQ$-torsor. It is straightforward to check that the given $G$-quotient is the twist of $\fX/\fG\dashrightarrow\cB_{k}\fG$ by $T$.
	
\end{proof}

Sometimes, it is interesting to study whether the structure actually descends to $\fX$, as opposed to a twist of $\fX$. For instance, if $\fX=\PP^{1}_{k}$ and the structure is a divisor $D\subset \PP^{1}_{K}$, one may want to study whether $D$ descends to $\PP^{1}_{k}$ \cite{marinatto} \cite{giulio-divisor}, but Theorem~\ref{theorem:GstrGtwist} only tells us whether $D$ descends to a Brauer-Severi variety of dimension $1$. Similarly, one might want to study if the embedding of a curve in $\PP^{2}$ is defined over the field of moduli, and not only if the curve embeds in a Brauer-Severi surface over the field of moduli.

\begin{definition}
	Let $X,G$ be as above, and $\fX$ a model of $X$ over $k$. A $G$-structure $\cX\to\cG$ of $X$ with field of moduli $k$ is \emph{$\fX$-neutral} if there exists a $2$-cartesian diagram
	\[\begin{tikzcd}
		\fX\dar\rar			&	\cX\dar	\\
		\spec k\rar["p"]	&	\cG
	\end{tikzcd}\]
	such that the induced embedding $\underaut_{\cG}(p)\subset\underaut(\fX)$ identifies $\underaut_{\cG}(p)(K)$ with $G$.
\end{definition}

Consider the quotient sheaf $\underaut(\fX)/\fN$. Taking the fibers of $\underaut(\fX)\to \underaut(\fX)/\fN$ defines a function $(\underaut(\fX)/\fN)(k)\to\H^{1}(k,\fN)$. Notice that these are sets, not groups. Still, they have a preferred object and it makes sense to consider the kernel 
\[\rK=\ker(\H^{1}(k,\fN)\to\H^{1}(k,\underaut(\fX))),\]
which is the image of $(\underaut(\fX)/\fN)(k)$, i.e. we have a long exact sequence for non-abelian cohomology. The elements of $\rK$ are the $\fN$-torsor which define the trivial twist of $\fX$.

The following version of Theorem~\ref{theorem:cohomology} is, again, a direct consequence of Proposition~\ref{prop:cohomology}

\begin{theorem}
	 If the composition 
	 \[(\underaut(\fX)/\fN)(k)\twoheadrightarrow\rK\hookrightarrow\H^{1}(k,\fN)\to\H^{1}(k,\fQ)\]
	 is not surjective, there exists a non-$\fX$-neutral $G$-structure with field of moduli equal to $k$. If $\fN$ is the entire normalizer of $\fG$ in $\underaut(\fX)$ the converse holds.
\end{theorem}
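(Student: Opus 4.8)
The argument should run parallel to the proof of Theorem~\ref{theorem:cohomology}, using Proposition~\ref{prop:cohomology} and tracking which twisted forms are isomorphic to $\fX$ through the subset $\rK$. First I would set up the dictionary: for a $\fQ$-torsor $T$ over $k$ write $\xi_{T}$ for the $G$-structure attached to the twisted $G$-quotient which is the coarse space of $[\fX/\fG]\times^{\fQ}T\to\cB_{k}\fG\times^{\fQ}T$; by Proposition~\ref{prop:Gtwist} it has field of moduli $k$, and (by the construction together with the description of the universal family via relative normalization) its residual gerbe is $\cG_{\xi_{T}}=\cB_{k}\fG\times^{\fQ}T$, which by definition is the gerbe of liftings of $T$ to $\fN$. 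A $k$-point $p$ of $\cG_{\xi_{T}}$ is thus an $\fN$-torsor $\tilde T$ with an identification $\tilde T/\fG\simeq T$; arguing with twists as in Proposition~\ref{prop:cohomology}, but with $\tilde T$ in place of the trivial torsor, the fibre of the universal family $\cX_{\xi_{T}}\to\cG_{\xi_{T}}$ over $p$ is the twist $\fX\times^{\fN}\tilde T$, and the induced embedding $\underaut_{\cG_{\xi_{T}}}(p)\into\underaut(\fX\times^{\fN}\tilde T)$ is obtained from $\fG\into\underaut(\fX)$ by the same twist, using that $\fG$ is normal in $\fN$. The heart of the matter is then the following.

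\begin{claim}
$\xi_{T}$ is $\fX$-neutral if and only if $[T]\in\H^{1}(k,\fQ)$ lies in the image of $\rK\to\H^{1}(k,\fQ)$, equivalently in the image of the composition in the statement.
\end{claim}

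Granting the claim, the theorem is immediate. If the composition is not surjective, choose a $\fQ$-torsor $T$ whose class is not in its image; then $\xi_{T}$ is a $G$-structure with field of moduli $k$ which, by the claim, is not $\fX$-neutral. Conversely, assume $\fN$ is the entire normalizer of $\fG$ and that the composition is surjective. Given an arbitrary $G$-structure $\xi$ on $X$ with field of moduli $k$, Proposition~\ref{prop:cohomology} exhibits it as $\xi_{T}$ for some $\fQ$-torsor $T$; by surjectivity $[T]$ is in the image of the composition, so the claim gives that $\xi$ is $\fX$-neutral.

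For the claim, one direction is soft: if $[T]$ is the image of a class $[\tilde T]\in\rK$, then $\tilde T$ lifts $T$ and the underlying twist $\fX\times^{\fN}\tilde T$ is trivial, so the $k$-point $p$ of $\cG_{\xi_{T}}$ attached to $\tilde T$ has fibre isomorphic to $\fX$, and one checks that the isomorphism can be chosen so that the twisted copy $\fG\times^{\fN}\tilde T$ of $\fG$ is carried onto the standard $\fG\subset\underaut(\fX)$, exhibiting $\xi_{T}$ as $\fX$-neutral. For the other direction, a $k$-point $p$ witnessing $\fX$-neutrality supplies an $\fN$-torsor $\tilde T$ lifting $T$ with $\fX\times^{\fN}\tilde T\simeq\fX$, i.e.\ $[\tilde T]\in\rK$, so $[T]$ is in the image. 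The delicate point — the step I expect to be the main obstacle — is exactly the interplay between the $\underaut(\fX)$-triviality of $\tilde T$, which is what $\rK$ records, and the requirement in the definition of $\fX$-neutrality that the induced $\underaut_{\cG_{\xi_{T}}}(p)(K)$ be precisely the subgroup $G\subset\aut_{K}(X)$ and not merely a conjugate of it: one must verify that, when $\fN$ is the full normalizer of $\fG$, the available freedom in trivializing $\tilde T$ (conjugation by elements of $\underaut(\fX)(k)$) is enough to move the embedded twisted copy of $\fG$ back onto $\fG$. Lemma~\ref{lemma:autocover}, which identifies $\fQ$ with the full sheaf of automorphisms of $\fX/\fG$ that lift to $\cB_{k}\fG$, is what makes this go through, and is where the normalizer hypothesis enters in the converse; the remaining manipulations are formal, along the lines of the proof of Proposition~\ref{prop:cohomology}.
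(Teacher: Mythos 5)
Your proposal reconstructs, in much more detail, an argument of the type the paper intends (the paper's proof is one sentence: ``direct consequence of Proposition~\ref{prop:cohomology}''). The dictionary between $\fQ$-torsors $T$ and $G$-structures $\xi_T$, the identification of $k$-points of $\cG_{\xi_T}=\cB_k\fG\times^{\fQ}T$ with lifts of $T$ to $\fN$-torsors, and the reduction of the theorem to the Claim that $\xi_T$ is $\fX$-neutral iff $[T]$ lies in the image of $\rK\to\H^1(k,\fQ)$, are all the right moves; the Claim and its use for the easy direction and for the converse (via Proposition~\ref{prop:cohomology}) are exactly as needed.

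The gap is in the forward direction of your Claim, and you in fact put your finger on it yourself. Membership of $[\tilde T]$ in $\rK$ only says the underlying twist $\fX\times^{\fN}\tilde T$ is isomorphic to $\fX$; the definition of $\fX$-neutral additionally asks that the resulting embedding carry $\underaut_{\cG_{\xi_T}}(p)(K)$ onto $G\subset\aut_K(X)$, not merely onto a conjugate. You assert that ``one checks the isomorphism can be chosen so that the twisted copy $\fG\times^{\fN}\tilde T$ is carried onto the standard $\fG$,'' and that Lemma~\ref{lemma:autocover} is what makes this go through — but Lemma~\ref{lemma:autocover} is about the identification $\fQ\simeq\fF$ used in Proposition~\ref{prop:cohomology}, and does not by itself address the conjugation ambiguity. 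Indeed a direct cocycle computation shows that if a trivialization $\psi$ of $\fX\times^{\fN}\tilde T$ satisfies $\psi^{-1}G\psi=G$, then $\psi$ normalizes $G$, and the resulting cocycle $n_\sigma=\psi\,\sigma(\psi)^{-1}$ for $\tilde T$ becomes an $\fN$-valued coboundary — seemingly forcing $\tilde T$, hence $T$, to be trivial. So as written this step does not go through. What needs to be used here is the extra information carried by a preimage $s\in(\underaut(\fX)/\fN)(k)$ — namely the $k$-rational subgroup $\fG_s\subset\underaut(\fX)$ conjugate to $\fG$ that $s$ singles out, and the fact that the relevant conjugation is then available over $k$ — or, equivalently, a careful unwinding of what ``identifies $\underaut_\cG(p)(K)$ with $G$'' should mean once the freedom in both $p$ and the $2$-cartesian square is taken into account. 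Until that is spelled out, the forward implication of your Claim remains unjustified.
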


\section{Distinguished subsets}\label{sec:dist}

As we have seen above, $G$-structures with field of moduli $k$ correspond to twisted $G$-quotients of $X$ over $k$. If $Y$ is a twisted $G$-quotient over $k$ there is a rational map $Y\dashrightarrow\cG$ where $\cG$ is the residual gerbe and the structure is defined over the field of moduli if and only if $\cG(k)\neq\emptyset$.

If $y\in Y(k)$ is a smooth rational point and $G$ has degree prime with $\cha k$, then by the Lang-Nishimura theorem for tame stacks \cite[Theorem 4.1]{giulio-angelo-valuative} we have that $\cG(k)\neq\emptyset$ and hence the $G$-structure is defined over $k$. The smoothness assumption on $y$ can be relaxed, see \cite[\S 6]{giulio-angelo-moduli}, \cite{giulio-tqs2}. Because of this, it is important to have a framework to construct rational points (or, more generally, subspaces) of arbitrary twisted $G$-quotients. The proof of the following is straightforward.

\begin{lemma}\label{lemma:descent}
	A closed subscheme $Z\subset X$ descends to a twisted $G$-quotient $Y$ if and only if $\phi(Z)=Z$ for every $\phi\in\cN_{Y}$.\qed
\end{lemma}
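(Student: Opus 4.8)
The plan is to unwind both directions through the characterization of a twisted $G$-quotient $Y$ in terms of its distinctive subgroup $\cN_Y$ and the universal family construction $[X/\cN_Y] \to [\spec K/\cN_Y]$ from Proposition~\ref{prop:Gtwist}. Recall that by definition $Y$ comes with a composition $q \colon X \to X/G \simeq Y_K \to Y$, and $\cN_Y \subset \aut_k(X)$ consists exactly of those $k$-automorphisms $\phi$ of $X$ with $q \circ \phi = q$. The statement to prove is that a closed subscheme $Z \subset X$ is the base change to $K$ of a closed subscheme $\cZ \subset Y$ (i.e.\ $Z = q^{-1}(q(Z))$ in an appropriate sense, or more precisely $Z$ arises from a closed subscheme of $Y$ along $q$) if and only if $\phi(Z) = Z$ for all $\phi \in \cN_Y$.

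First I would treat the easy direction. If $Z$ descends to $Y$, say $Z$ is the preimage under $q$ of a closed subscheme $\cZ \subset Y$ — more carefully, $Z$ is $K$-isomorphic to the pullback of $\cZ$ along $X \to Y$ compatibly with $q$ — then for any $\phi \in \cN_Y$ the identity $q \circ \phi = q$ forces $\phi$ to permute the fibers of $q$ trivially, hence $\phi(Z) = \phi(q^{-1}(\cZ)) = q^{-1}(\cZ) = Z$. This is just functoriality of preimages under the equality $q\phi = q$. Conversely, suppose $\phi(Z) = Z$ for every $\phi \in \cN_Y$. The point is that $Y$ is the quotient of $X$ by $\cN_Y$ in the appropriate sense: concretely, $[X/\cN_Y] \to [\spec K/\cN_Y]$ is the universal family, its coarse space recovers $X \to Y$, and since $X$ is normal, $Y$ is identified with the relative normalization / the quotient algebraic space. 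A closed $\cN_Y$-invariant subscheme $Z \subset X$ therefore descends to a closed subscheme of the quotient; one checks that the quotient of $Z$ along $Z \to q(Z) \subset Y$ is a closed subscheme $\cZ$ whose preimage is $Z$. The descent here is fppf (or even étale, since $X \to Y$ is generically an $\cN_Y/\gal$-torsor in the relevant sense and finite elsewhere by normality), and $\cN_Y$-invariance of the closed subscheme $Z$ is exactly the descent datum.

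I expect the main obstacle to be purely bookkeeping rather than substance: making precise what "$Z$ descends to $Y$" means when $X \to Y$ is not finite étale but only generically so — one must pass through the normalization description of $Y$ used in \S\ref{sect:twist} (the relative normalization of $Y \times \cG$), observe that a $\cN_Y$-invariant closed subscheme of $X$, being a $\sigma$-linear-stable closed subscheme for all $\sigma \in \gal(K/k)$ and $G$-stable, has a well-defined image closed subscheme in $Y$ whose formation commutes with the relevant base changes. Because the paper explicitly flags the proof as "straightforward," I would keep this to a sentence or two: $\cN_Y$-invariance of $Z$ provides an fppf descent datum for the closed subscheme $q(Z)_{\mathrm{red-image}}$ along $X \to Y$, and $X$ being normal together with $Z$ being closed guarantees that this descended object pulls back to $Z$ on the nose. $\qed$
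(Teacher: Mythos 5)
Your forward direction is clean and correct: if $Z = q^{-1}(\cZ)$ for a closed subscheme $\cZ \subset Y$, then $q\circ\phi = q$ immediately gives $\phi(Z)=Z$.

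The converse direction has the right strategy (descend $Z$ through the quotient structure of $Y$) but the descent mechanism you invoke is not the right one, and the way you describe it contains errors. You speak of fppf or \'etale descent along $X\to Y$, but $X\to Y$ is neither flat nor of finite type: it factors as $X\to X/G = Y_K \to Y$, where the first map is a finite quotient by a finite group (typically not flat) and the second is the non-finite-type base change along $\spec K \to \spec k$. The phrase ``$\cN_Y/\gal$-torsor'' is also not meaningful, since $\gal(K/k)$ is the \emph{quotient} of $\cN_Y$ (via $1 \to G \to \cN_Y \to \gal(K/k)\to 1$), not a subgroup, so $\cN_Y/\gal$ is undefined. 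The correct two-step argument is: (a) since $G\subset\cN_Y$, the subscheme $Z$ is $G$-invariant, so its image $\pi(Z)\subset X/G = Y_K$ is a closed subscheme with $\pi^{-1}(\pi(Z)) = Z$; (b) since $\cN_Y$ surjects onto $\gal(K/k)$ and every $\phi\in\cN_Y$ fixes $Z$, the closed subscheme $\pi(Z)\subset Y_K$ is stable under the semilinear Galois action, and hence descends to a closed subscheme of $Y$ by ordinary Galois descent (reducing to a finite subextension $k'/k$ over which $Z$ is defined, since $Z$ is quasi-compact). Your answer is in the right spirit --- and the paper indeed marks this as straightforward --- but the descent argument as written would not survive a careful reading.
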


Write $\cN_{X/k,G}\subset\aut_{k}(X)$ for the normalizer of $G$ in $\aut_{k}(X)$.

\begin{lemma}\label{lemma:distinctive}
	We have $\cN_{Y}\subset\cN_{X/k,G}$.
\end{lemma}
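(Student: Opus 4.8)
The plan is to read off the statement from the short exact sequence
\[1\to G\to\cN_{Y}\to\gal(K/k)\to 1\]
established immediately before Proposition~\ref{prop:Gtwist}. There, $G$ is realized as the kernel of the restriction homomorphism $\cN_{Y}\to\gal(K/k)$, and in particular $G$ is a \emph{normal} subgroup of $\cN_{Y}$. By definition of a normal subgroup, every $\phi\in\cN_{Y}$ satisfies $\phi G\phi^{-1}=G$, which is exactly the assertion that $\phi$ lies in the normalizer $\cN_{X/k,G}$ of $G$ in $\aut_{k}(X)$. Hence $\cN_{Y}\subseteq\cN_{X/k,G}$.

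For completeness I would spell out the conjugation directly, since this makes transparent why we land back inside $G$ rather than merely inside $\cN_{Y}$. Write $q\colon X\to X/G\simeq Y_{K}\to Y$ for the structure map. Given $\phi\in\cN_{Y}$ we have $q\phi=q$, hence also $q\phi^{-1}=q$; given $g\in G\subseteq\cN_{Y}$ we have $qg=q$. Therefore $\psi\eqdef\phi g\phi^{-1}$ satisfies $q\psi=q$, so $\psi\in\cN_{Y}$. Moreover, if $\phi$ is $\sigma$-linear over $k$ then $\psi$ is $\sigma\,\id\,\sigma^{-1}$-linear, i.e.\ $K$-linear, so $\psi\in\aut_{K}(X)$. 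But the $K$-linear elements of $\cN_{Y}$ are precisely those in $\ker(\cN_{Y}\to\gal(K/k))=G$ — this is where one uses that $X$ is separated and integral, so any $K$-automorphism of $X$ commuting with $X\to X/G$ belongs to $G$ — and therefore $\psi\in G$. Applying the same argument with $\phi^{-1}$ in place of $\phi$ gives $\phi^{-1}G\phi\subseteq G$, so $\phi G\phi^{-1}=G$.

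Since the statement is essentially a formal consequence of material already in place, there is no genuine obstacle here; the only point requiring a little care is the semilinearity bookkeeping, namely that conjugating a $K$-linear automorphism by a $\sigma$-linear one is again $K$-linear, which is precisely what allows us to conclude $\psi\in G$ and not just $\psi\in\cN_{Y}$.
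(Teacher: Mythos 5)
Your proof is correct and essentially coincides with the paper's: the paper's one-line argument is exactly your second paragraph, observing that $\phi^{-1}g\phi\in\cN_Y\cap\aut_K(X)=G$. Your first paragraph (normality of $G$ in $\cN_Y$ because it is the kernel of $\cN_Y\to\gal(K/k)$) is a marginally cleaner way to phrase the same fact, and the semilinearity bookkeeping you add is just an unpacking of the identification $\cN_Y\cap\aut_K(X)=G$.
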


\begin{proof}
	If $\phi\in\cN_{Y}$ and $g\in G$, then clearly $\phi^{-1}\circ g\circ\phi\in\cN_{Y}\cap\aut_{K}(X)=G$, so $\phi\in\cN_{X/k,G}$.
\end{proof}

\begin{definition}
	A closed subset $Z\subset X$ is a \emph{distinguished subset} if, for every $\tau\in\cN_{X/k,G}$, $\tau(Z)=Z$.
\end{definition}

A distinguished subset is $G$-invariant since $G\subset\cN_{X/k,G}$, but the converse is false. Write $\pi$ for the projection $X\to X/G$.

\begin{lemma}\label{lemma:dist}
	Let $Z\subset X$ be a distinguished subset and $Y$ a twisted $G$-quotient over $k$. Then $\pi(Z)\subset X/G$ descends to a closed subset of $Y$.
\end{lemma}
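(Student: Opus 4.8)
The plan is to work on the algebraic space $X/G\simeq Y_{K}$ and show that the closed subset $\pi(Z)\subseteq X/G$ is stable under the canonical $\gal(K/k)$\dash action identifying $Y_{K}$ with $Y\times_{k}K$; Galois descent along $Y_{K}\to Y$ then produces the desired closed subset of $Y$. First note that $\pi(Z)$ is indeed a closed subset of $X/G$, since $G$ is finite and so $\pi\colon X\to X/G$ is a finite, in particular closed, morphism; the descended subset will turn out to be $q(Z)\subseteq Y$, which is closed because $q\colon X\to Y$ is integral, hence closed.

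Next I would produce, for each $\sigma\in\gal(K/k)$, an automorphism of $X/G=Y_{K}$ over $Y$ lying over $\sigma$. By definition of a twisted $G$-quotient the map $\cN_{Y}\to\gal(K/k)$ is surjective, so I choose $\phi_{\sigma}\in\cN_{Y}$ mapping to $\sigma$. By Lemma~\ref{lemma:distinctive} we have $\phi_{\sigma}\in\cN_{X/k,G}$, so $\phi_{\sigma}$ normalises $G$; therefore $\pi\circ\phi_{\sigma}\colon X\to X/G$ is invariant under precomposition with $G$ and factors uniquely as $\bar\phi_{\sigma}\circ\pi$ for a $\sigma$-linear automorphism $\bar\phi_{\sigma}$ of $X/G$. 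Since $\phi_{\sigma}$ commutes with $q=p\circ\pi$ (writing $p\colon X/G\to Y$) and $\pi$ is an epimorphism, $\bar\phi_{\sigma}$ commutes with $p$, i.e. it is a $\sigma$-linear automorphism of $Y_{K}$ over $Y$.

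The crucial observation is that such an automorphism is unique, hence equals the tautological one $\mathrm{can}_{\sigma}=\id_{Y}\times\spec\sigma$: indeed $\mathrm{can}_{\sigma}^{-1}\circ\bar\phi_{\sigma}$ is an automorphism of $Y_{K}$ over $Y$ and over $\spec K$, hence over $Y\times_{k}K=Y_{K}$, so it is the identity. Now, because $Z$ is a distinguished subset and $\phi_{\sigma}\in\cN_{Y}\subseteq\cN_{X/k,G}$, we have $\phi_{\sigma}(Z)=Z$, and therefore $\mathrm{can}_{\sigma}(\pi(Z))=\bar\phi_{\sigma}(\pi(Z))=\pi(\phi_{\sigma}(Z))=\pi(Z)$. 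Thus $\pi(Z)$ is invariant under the canonical $\gal(K/k)$\dash action on $Y_{K}$; since $\gal(K/k)$ acts transitively on the fibres of $Y_{K}\to Y$, one checks that $\pi(Z)$ is exactly the preimage of the closed subset $q(Z)\subseteq Y$, which settles the claim. The only point requiring care is the identification of the induced automorphism $\bar\phi_{\sigma}$ with the tautological descent datum $\mathrm{can}_{\sigma}$; the rest is bookkeeping with the diagram $X\to X/G\to Y$ and the definitions of $\cN_{Y}$ and of a distinguished subset.
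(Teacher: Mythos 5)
Your proof is correct and follows essentially the same route as the paper's, which simply combines Lemma~\ref{lemma:distinctive} ($\cN_{Y}\subset\cN_{X/k,G}$, so a distinguished $Z$ is $\cN_{Y}$-invariant) with Lemma~\ref{lemma:descent} (the $\cN_{Y}$-invariant closed subsets are exactly those descending to $Y$). You invoke Lemma~\ref{lemma:distinctive} but re-derive the content of Lemma~\ref{lemma:descent} inline, carefully matching the induced $\sigma$-linear automorphisms $\bar\phi_{\sigma}$ of $X/G$ with the canonical Galois descent datum on $Y_{K}$ --- a useful expansion of what the paper treats as straightforward.
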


\begin{proof}
	Follows directly from Lemma~\ref{lemma:descent} and Lemma~\ref{lemma:distinctive}.
\end{proof}

Conjugation defines a homomorphism $\cN_{X/k,G}\to\aut(G)$, let $\cA_{X/k,G}\subset\aut(G)$ be its image. Clearly, $\cA_{X/k,G}$ contains every inner automorphism of $G$.

While distinguished subsets are naturally defined in terms of the group $\cN_{X/k,G}\subset\aut_{k}(X)$, it is often sufficient to have knowledge about $\cA_{X/k,G}\subset\aut(G)$. Let us give some examples of this.

\begin{example}\label{example:dist}
	If $H\subset G$ is a subgroup and $Z\subset X$ a subspace, we say that $H$ stabilizes (resp. fixes) $Z$ if the elements of $H$ restrict to automorphisms (resp. to the identity) on $Z$. 
	
	Let $H\subset G$ be a subgroup, and let $S_{H}$ be the set of subgroups of $G$ of the form $\phi(H)$ for some $\phi\in\cA_{X/k,G}$. The following are distinguished subsets.
	\begin{itemize}
		\item The union and the intersection of the fixed loci of elements of $S_{H}$.
		\item Let $n$ be an integer, and suppose that $H$ stabilizes (resp. fixes) a finite number of irreducible closed subsets of dimension $n$, write $C_{n,H}$ for their union and similarly $C_{n,H'}$ for $H'\in S_{H}$. Then $\bigcup_{H'\in S_{H}}C_{n,H'}$ and $\bigcap_{H'\in S_{H}}C_{n,H'}$ are distinguished subsets.
		\item Suppose that $X$ is smooth and proper and that it has a line bundle $L$ whose class in the Néron-Severi group is $\aut_{k}(X)$-invariant, e.g. $X=\PP^{n}$ and $L=\cO(1)$. Then we can repeat the previous point but restricted to irreducible closed subset of a fixed degree $d$.
	\end{itemize}
\end{example}

Because of Example~\ref{example:dist} and the many other similar examples that can be given, we want to understand the group $\cA_{X/k,G}\subset\aut(G)$. We have inclusions
\[\operatorname{Inn}(G)\subset \cA_{X/K,G}\subset\cA_{X/k,G}\subset\aut(G).\]
Usually, the subgroup $\cA_{X/K,G}$ is easy to understand, since it is defined in terms of ``geometric'' $K$-automorphisms of $X$ and it is normal in $\cA_{X/k,G}$. It remains to study its cokernel. 

Suppose that $X,G$ descend to $\fX,\fG$ over $k$ with $\fG$ a group scheme, and that the action of $G$ on $X$ descends to an action of $\fG$ on $\fX$. In particular, we have an action of $\gal(K/k)$ on $X$ and hence a section $\gal(K/k)\to\aut_{k}(X)$ of $\aut_{k}(X)\to\gal(K/k)$. Furthermore, we have an action $\gal(K/k)\to\aut(G)$, $\sigma\mapsto\phi_{\sigma}$ on $G=\fG(K)$.

\begin{proposition}
	The image of $\gal(K/k)\to\aut_{k}(X)$ is contained in $\cN_{X/k,G}$, and $\cN_{X/k,G}$ is generated by $\cN_{X/K,G}$ and $\gal(K/k)$.
\end{proposition}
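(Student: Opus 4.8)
The plan is to establish the two assertions separately. Throughout I identify $\gal(K/k)$ with its image in $\aut_k(X)$ under the section attached to the model $\fX$, so that $\tilde\sigma\in\aut_k(X)$ denotes the $\sigma$-linear automorphism of $X$ recording the descent datum of $\fX$ along $K/k$. Note that since $X$ already descends to $k$, the map $\aut_k(X)\to\gal(K/k)$ is surjective and $1\to\aut_K(X)\to\aut_k(X)\to\gal(K/k)\to 1$ is short exact; this is all I need from the earlier discussion.

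First I would show that each $\tilde\sigma$ lies in $\cN_{X/k,G}$. The group $G$ sits inside $\aut_K(X)$ as the image of $g\mapsto(\text{action of }g)$ for $g\in\fG(K)$. Because the action morphism $\fG\times\fX\to\fX$ is defined over $k$, base-changing to $K$ and comparing with the descent isomorphisms on source and target yields, for every $g\in G$, the identity $\tilde\sigma\circ g\circ\tilde\sigma^{-1}=\phi_\sigma(g)$ in $\aut_K(X)$, where $\phi_\sigma\in\aut(G)$ is the Galois action on $\fG(K)$. (For $\fG$ finite étale, the descent datum on $\fG$ is exactly this Galois action on $\fG(K)$.) In particular conjugation by $\tilde\sigma$ preserves $G$, so $\tilde\sigma\in\cN_{X/k,G}$, and the automorphism of $G$ it induces via $\cN_{X/k,G}\to\aut(G)$ is $\phi_\sigma$.

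Next I would prove that $\cN_{X/k,G}$ is generated by $\cN_{X/K,G}$ and $\gal(K/k)$. One first records the identity $\cN_{X/k,G}\cap\aut_K(X)=\cN_{X/K,G}$, which is immediate from the definitions. Now take an arbitrary $\psi\in\cN_{X/k,G}$ and let $\sigma\in\gal(K/k)$ be its image under $\aut_k(X)\to\gal(K/k)$. Then $\tilde\sigma^{-1}\circ\psi$ maps to $1$ in $\gal(K/k)$, hence lies in $\aut_K(X)$; and since $\psi$ and $\tilde\sigma$ both belong to the subgroup $\cN_{X/k,G}$ (the latter by the previous step), so does $\tilde\sigma^{-1}\circ\psi$. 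Therefore $\tilde\sigma^{-1}\circ\psi\in\cN_{X/k,G}\cap\aut_K(X)=\cN_{X/K,G}$, and $\psi=\tilde\sigma\circ(\tilde\sigma^{-1}\circ\psi)$ exhibits $\psi$ as a product of an element of $\gal(K/k)$ and an element of $\cN_{X/K,G}$. The reverse inclusion is trivial since both $\cN_{X/K,G}$ and the image of $\gal(K/k)$ lie in $\cN_{X/k,G}$.

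The only point requiring genuine care is the conjugation identity $\tilde\sigma\circ g\circ\tilde\sigma^{-1}=\phi_\sigma(g)$: it is a diagram chase unwinding what it means for the $\fG$-action on $\fX$ to be a $k$-morphism, namely compatibility with the descent isomorphisms on $\fG\times\fX$ and on $\fX$, together with the identification of the descent datum on a finite étale $\fG$ with the Galois action on $\fG(K)$. Everything else is formal manipulation inside the short exact sequence above.
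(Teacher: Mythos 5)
Your proof is correct and follows essentially the same path as the paper's: you use the descent of the action morphism $\fG\times\fX\to\fX$ to $k$ to extract the conjugation identity $\tilde\sigma\circ g\circ\tilde\sigma^{-1}=\phi_\sigma(g)$ (the paper phrases this as $\sigma^*$ being $\phi_\sigma$-equivariant), and then decompose an arbitrary $\psi\in\cN_{X/k,G}$ as $\tilde\sigma\cdot(\tilde\sigma^{-1}\psi)$ with $\tilde\sigma^{-1}\psi\in\cN_{X/k,G}\cap\aut_K(X)=\cN_{X/K,G}$. The only cosmetic difference is the order of composition in the final factorization and your explicit recording of the identity $\cN_{X/k,G}\cap\aut_K(X)=\cN_{X/K,G}$, which the paper leaves implicit.
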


\begin{proof}
	Choose an element $\sigma\in\gal(K/k)$, let $\phi_{\sigma}$ be the induced automorphism of $G=\fG(K)$. Since the action $\rho:G\times X\to X$ descends to an action $\fG\times \fX\to \fX$ over $k$, we have a commutative diagram
	\[\begin{tikzcd}
		G\times X\ar[r,"\rho"]\ar[d,"\phi_{\sigma}\times\sigma^{*}"]	&	X\ar[d,"\sigma^{*}"]	\\
		G\times X\ar[r,"\rho"]											&	X
	\end{tikzcd}\]
	where the vertical arrows are $\sigma$-equivariant. It follows that $\sigma^{*}:X\to X$ is $\phi_{\sigma}$-equivariant, and hence $\sigma^{*}\in\cN_{X/k,G}$.
	
	Now consider an element $\tau\in\cN_{X/k,G}\subset\aut_{k}(X)$ normalizing $G$, and let $\sigma$ be its image in $\gal(K/k)$. Since $\sigma^{*}\in\cN_{X/k,G}$, then $\tau\circ\sigma^{*-1}$ is an element of $\cN_{X/k,G}$ which maps to the identity in $\gal(K/k)$, i.e. $\tau\circ\sigma^{*-1}\in\cN_{X/K,G}$.
\end{proof}

\begin{corollary}
	The subgroup $\cA_{X/k,G}\subset\aut(G)$ is generated by $\cA_{X/K,G}$ and by the image of $\gal(K/k)\to\aut(G)$. In particular, if $\fG$ is an inner form of $G$ then $\cA_{X/k,G}=\cA_{X/K,G}$.
\end{corollary}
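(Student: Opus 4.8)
The plan is to deduce this immediately from the preceding Proposition by pushing forward the generating set along the conjugation homomorphism $c\colon\cN_{X/k,G}\to\aut(G)$, whose image is by definition $\cA_{X/k,G}$.

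First I would recall from the proof of the Proposition that, under the section $\gal(K/k)\to\aut_{k}(X)$, $\sigma\mapsto\sigma^{*}$, coming from the fixed action of $\fG$ on $\fX$, the automorphism $\sigma^{*}$ is $\phi_{\sigma}$-equivariant; equivalently $c(\sigma^{*})=\phi_{\sigma}$. Hence $c$ carries the subset $\{\sigma^{*}:\sigma\in\gal(K/k)\}\subset\cN_{X/k,G}$ onto the image of $\gal(K/k)\to\aut(G)$, and it carries $\cN_{X/K,G}$ onto $\cA_{X/K,G}$ by definition of the latter. Now the Proposition says that $\cN_{X/k,G}$ is generated by $\cN_{X/K,G}$ together with the $\sigma^{*}$; since a group homomorphism sends a generating set of the source to a generating set of its image, $\cA_{X/k,G}=c(\cN_{X/k,G})$ is generated by $\cA_{X/K,G}$ and by the image of $\gal(K/k)\to\aut(G)$, which is the first assertion.

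For the last sentence, I would use that $\fG$ is an inner form of $G$ precisely when the homomorphism $\gal(K/k)\to\aut(G)$, $\sigma\mapsto\phi_{\sigma}$, takes values in $\operatorname{Inn}(G)$. Since $G\subset\aut_{K}(X)$ normalizes itself, conjugation by elements of $G$ shows $\operatorname{Inn}(G)\subset\cA_{X/K,G}$; therefore in the inner case the extra generators coming from $\gal(K/k)$ already lie in $\cA_{X/K,G}$, and the first assertion collapses to $\cA_{X/k,G}=\cA_{X/K,G}$.

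There is no genuine obstacle here: the whole geometric content — that the section lands in the normalizer, that it acts on $G$ as $\phi_{\sigma}$, and that these elements together with $\cN_{X/K,G}$ exhaust $\cN_{X/k,G}$ — is already contained in the Proposition, and what remains is the formal fact that homomorphisms preserve generation. The only points needing a word of care are fixing the convention for ``inner form'' and recording the inclusion $\operatorname{Inn}(G)\subset\cA_{X/K,G}$.
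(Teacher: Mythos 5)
Your proof is correct and is exactly the argument the paper leaves implicit, as the corollary is stated without proof: push the generating set of $\cN_{X/k,G}$ from the preceding Proposition forward along the conjugation homomorphism $c\colon\cN_{X/k,G}\to\aut(G)$, noting $c(\sigma^{*})=\phi_{\sigma}$, and then absorb the Galois-image generators into $\cA_{X/K,G}$ via the already-recorded inclusion $\operatorname{Inn}(G)\subset\cA_{X/K,G}$ in the inner-form case.
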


\bibliographystyle{amsalpha}
\bibliography{fmod}

\end{document}